\definecolor{Gray}{gray}{0.9}
\newcolumntype{g}{>{\columncolor{Gray}}r}
\newcolumntype{h}{>{\columncolor{Gray}}r}
\newtheorem{theorem}{Theorem}[section]
\newtheorem{proposition}[theorem]{Proposition}
\newtheorem{definition}[theorem]{Definition}
\newtheorem{remark}[theorem]{Remark}
\newcommand{\srcsize}{\@setfontsize{\srcsize}{5pt}{5pt}}
\newcommand{\abs}[1]{\left\lvert#1\right\rvert}
\newcommand{\norm}[1]{\left\lVert#1\right\rVert}
\newcommand{\OO}[1]{\ifthenelse{\equal{#1}{0}}{\mathrm{O}(h)}{
		\ifthenelse{\equal{#1}{+1}}{\mathrm{O}(h)}{
			\ifthenelse{\equal{#1}{-1}}{\mathrm{O}h)}{
				\mathrm{O}(h)
}}}}  
\title{A domain splitting strategy for solving PDEs}
\author{
Ken Trotti\\
Faculty of Informatics\\
University of Italian Switzerland\\
ken.trotti@usi.ch
}
\begin{document}
\maketitle
	
\begin{abstract}
In this work we develop a novel domain splitting strategy for the solution of partial differential equations. Focusing on a uniform discretization of the $d$-dimensional advection-diffusion equation, our proposal is a two-level algorithm that merges the solutions obtained from the discretization of the equation over highly anisotropic submeshes to compute an initial approximation of the fine solution. The algorithm then iteratively refines the initial guess by leveraging the structure of the residual. Performing costly calculations on anisotropic submeshes enable us to reduce the dimensionality of the problem by one, and the merging process, which involves the computation of solutions over disjoint domains, allows for parallel implementation.
\end{abstract}

\keywords{
partial differential equations \and advection-diffusion equation \and Richardson extrapolation \and domain splitting \and parallel in time integration
}



\section{Introduction}
Supercomputer nowadays are high-performance computers with thousands and even millions of cores, that are designed to handle extremely large and complex computing tasks. This makes parallelization, or the ability to divide a task into smaller parts that can be run simultaneously on different cores, an important factor in maximizing the performance of a supercomputer. To effectively exploit this feature, an algorithm must be designed accordingly. In the case of high-dimensional partial differential equations, there are several standard approaches. These include Deep Learning techniques \cite{deeplearning,deeplearning2,deeplearning3}, Monte Carlo methods \cite{montecarlo,montecarlo2}, Sparse grids \cite{sparsegrids}, and other methods consists in exploiting properties of the domain, symmetries, regularity, see e.g. \cite{symmetry}.\\
The Sparse Grids (SG) method starts with a uniform mesh with $N^d$ grid points and then removes certain points according to a specific rule, yielding a sparse mesh with $O\big(N\log^{d-1} N\big)$ grid points. While this approach allows for low memory requirements, it also leads to an increased approximation error compared to using an uniform mesh. Specifically, when a second-order accurate scheme is used, the resulting approximation error in the $L^2$ norm is $O\big(N^{-2}\log(N)^{d-1}\big)$, which is larger than the error of $O\big(N^{-2}\big)$ achieved with the uniform mesh.\\
Our goal is to develop an approach that utilizes a coarser mesh than a uniform one, but avoids the increase in error that is characteristic of SG. Therefore, we aim to develop an algorithm that allows for efficient and parallel computation while maintaining a high level of accuracy and potentially low memory requirements. To achieve this goal, we introduce the \textit{Anisotropic Submeshes Domain Splitting Method} (ASDSM), a novel two-level domain splitting method for the parallel solution of $d$-dimensional PDEs. 
ASDSM involves dividing a fine uniform mesh into multiple strongly anisotropic submeshes. The solutions obtained from each submesh are then merged together forming an approximate solution of the discretized PDE on the uniform mesh. The process is repeated by exploiting the structure of the residual to improve the accuracy of the solution.
This approach allows us to effectively reduce the dimensionality of the problem by a factor of one, allowing for the efficient solution of larger and more complex problems. Additionally, as discussed in Section \ref{subsec:Efficient_implementation_and_final_remarks} a smart memory handling strategy can further reduce the memory requirements compared to using the original uniform mesh.

In this preliminary research, we focus on the well-known advection-diffusion equation, which is a fundamental mathematical model used in a variety of fields including fluid dynamics, heat transfer, and contaminant transport \cite{fluiddynamics,heattransfer,contaminanttransport}. 
Consider the following $d$-dimensional advection-diffusion equation with Dirichlet boundary conditions (BCs)
\begin{equation}\label{eq:genericPDE}
	\begin{cases}
		\begin{tabular}{ll}
			$-\sum_{i=1}^d\alpha_i \frac{\partial u}{\partial x_i^2}(\mathbf{x})+\beta_i \frac{\partial u}{\partial x_i}(\mathbf{x})=s(\mathbf{x}),$ & $\mathbf{x}\in\Omega\subset\mathbb{R}^{d}$,\\
			$u(\mathbf{x})=g(\mathbf{x}),$& $\mathbf{x}\in\partial\Omega,$
		\end{tabular}
	\end{cases}
\end{equation}
and its time-dependent counterpart
\begin{equation}\label{eq:genericPDE_Time}
	\begin{cases}
		\begin{tabular}{ll}
			$\frac{\partial u}{\partial t}(\mathbf{x},t)-\sum_{i=1}^d\alpha_i \frac{\partial u}{\partial x_i^2}(\mathbf{x})+\beta_i \frac{\partial u}{\partial x_i}(\mathbf{x})=s(\mathbf{x},t),$ & $(\mathbf{x},t)\in\Omega\times[0,1]\subset\mathbb{R}^{d+1}$,\\
			$u(\mathbf{x})=g(\mathbf{x},t),$& $(\mathbf{x},t)\in\partial\Omega\times[0,1),$
		\end{tabular}
	\end{cases}
\end{equation}
where $s$ is the source term, $g$ the BCs, $\alpha>0$ the diffusion coefficient, $\beta$ the velocity field and $\Omega$ the space domain.

In the following sections, we will provide a detailed description of ASDSM. In details, in Section \ref{sec:intro_ASDSM} we introduce the basic tools needed to explain ASDSM. We provide the detailed construction of the involved meshes and projectors, and briefly sum up the discretization process of the PDE. In Section \ref{sec:2D_ASDSM}, we provide a step-by-step explanation with figures to illustrate the ASDSM Algorithm. We also provide some remarks on how to implement the algorithm efficiently. In Section \ref{sec:Results}, we present numerical results demonstrating the effectiveness of our method in terms of residual reduction per iteration. 
Finally, in Section \ref{sec:Conclusions}, we draw conclusions and outline potential areas for future work. We discuss the implications of our method and how it can be applied to other problems in the field.


\section{Introduction to the ASDSM Algorithm}\label{sec:intro_ASDSM}
%
For the sake of simplicity, in this section we will consider the two-dimensional case of the PDE in \eqref{eq:genericPDE} on the square domain $\Omega=[0,1]\times[0,1]$.

Before providing the ASDSM Algorithm, we first introduce the necessary meshes and fix the notation. We then discretize the PDE using the finite differences method, and introduce the projectors that will be used to project the unknown onto different meshes. These tools form the foundation upon which the ASDSM Algorithm is built, and will be essential for understanding the details of the method.

\subsection{Meshes}\label{subsec:meshes}
Let $N^{(x)}_c,N^{(x)}_f,N^{(y)}_c,N^{(y)}_f\in\mathbb{N}$ such that 
\begin{equation}\label{eq:condition_meshes}
	\begin{split}
		N^{(x)}_f+1=n_x(N^{(x)}_c+1),\\
		N^{(y)}_f+1=n_y(N^{(y)}_c+1),
	\end{split}
\end{equation} 
for some $n_x,n_y\in\mathbb{N}$. Then consider the step lengths
\begin{equation}
	\begin{tabular}{ll}
		$h_x=\frac{1}{N^{(x)}_f+1}$, & $h_y=\frac{1}{N^{(y)}_f+1}$,\\
		$H_x=\frac{1}{N^{(x)}_c+1}=n_xh_x$,\qquad\  & $H_y=\frac{1}{N^{(y)}_c+1}=n_yh_y$,
	\end{tabular}
\end{equation} 
and the following uniform meshes
\begin{equation}\label{eq:mesh2D}
	\begin{split}
		\Omega^{h_x,h_y}&=\lbrace (x_i,y_j) \,\big|\, x_i=ih_x,\ y_j=jh_y,\ i=1,...,N^{(x)}_f,\ j=1,...,N^{(y)}_f\rbrace;
		\\
		\Omega^{h_x,H_y}&=\lbrace (x_i,y_j) \,\big|\, x_i=ih_x,\ y_j=jH_y,\ i=1,...,N^{(x)}_f,\ j=1,...,N^{(y)}_c\rbrace;
		\\
		\Omega^{H_x,h_y}&=\lbrace (x_i,y_j) \,\big|\, x_i=iH_x,\ y_j=jh_y,\ i=1,...,N^{(x)}_c,\ j=1,...,N^{(y)}_f\rbrace;
		\\
		\Omega^{H_x,H_y}&=\lbrace (x_i,y_j) \,\big|\, x_i=iH_x,\ y_j=jH_y,\ i=1,...,N^{(x)}_c,\ j=1,...,N^{(y)}_c\rbrace;
		\\
		\Omega^{h_x,h_y}_{i,j}&=
		\begin{Bmatrix}
			\begin{tabular}{l}
				\hspace{-0.1cm}\multirow{2}{*}{$(x_k,y_l)$}\\ \ 
			\end{tabular}
			\hspace{-0.4cm}& \Bigg|
			\begin{tabular}{lll}
				$x_k=iH_x+kh_x,$ & $k=1,...,n_x-1, $ & $i=0,...,N^{(x)}_c$\\
				$y_l=jH_y+lh_y,$ & $l=1,...,n_y-1, $ & $j=0,...,N^{(y)}_c$
			\end{tabular}\hspace{-0.1cm}
		\end{Bmatrix}.
	\end{split}
\end{equation}
We will refer to the meshes in equation \eqref{eq:mesh2D} respectively 
as the fine mesh, the anisotropic meshes dense along the $x$ and $y$ axis, the coarse mesh and the ``holes''. Moreover, the union between $\Omega^{h_x,H_y}\cup\Omega^{H_x,h_y}$ is called \emph{skeleton} or \emph{coarse structure}. Furthermore, we denote by $\Omega^{h_x,h_y}_{holes}$ the union of all holes, i.e. $\Omega^{h_x,h_y}_{holes}:=\cup_{i,j=0}^{N^{(x)}_c,N^{(y)}_c}\Omega^{h_x,h_y}_{i,j}$.

The following proposition, which is a consequence of the condition in equation~\eqref{eq:condition_meshes}, shows the relations between the meshes in \eqref{eq:mesh2D}.
\begin{proposition}\label{prop:link_between_meshes}
	Consider the meshes in equation \eqref{eq:mesh2D}, 
	then the following relations hold
	\begin{itemize}
		\item[a)] $\Omega^{h_x,H_y}\subset\Omega^{h_x,h_y}$, \qquad $\Omega^{H_x,h_y}\subset\Omega^{h_x,h_y}$;
		\item[b)] 
		$\Omega^{H_x,H_y}\subset\Omega^{h_x,h_y},\qquad\Omega^{H_x,H_y}\subset\Omega^{H_x,h_y},\qquad\Omega^{H_x,H_y}\subset\Omega^{h_x,H_y}$;
		\item[c)] $\Omega^{h_x,h_y}_{i,j}\cap \Omega^{h_x,h_y}_{k,l}=\emptyset$\quad if\quad $(i,j)\neq(k,l)$;
		\item[d)]
		$\Omega^{h_x,h_y}_{holes}
		=\Omega^{h_x,h_y} \setminus \left( \Omega^{h_x,H_y} \cup \Omega^{H_x,h_y}\right)$.
		
	\end{itemize}
\end{proposition}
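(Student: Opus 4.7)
The plan is to unpack the defining index ranges of each mesh in \eqref{eq:mesh2D} and compare them coordinate-wise, using the coupling $H_x = n_x h_x$, $H_y = n_y h_y$ that follows from condition \eqref{eq:condition_meshes}. The single arithmetic identity doing all the work will be $n_x (N_c^{(x)}+1) = N_f^{(x)}+1$ (and its $y$ analogue), which rewrites as $n_x N_c^{(x)} + (n_x - 1) = N_f^{(x)}$; this controls all the boundary/range issues.

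For (a) and (b), I would take a generic point of $\Omega^{h_x,H_y}$, write it as $(ih_x, jH_y) = (ih_x, (jn_y)h_y)$, set $j' := jn_y$, and check that $j' \in \{1,\dots,N_f^{(y)}\}$ since $N_c^{(y)} n_y = N_f^{(y)}+1 - n_y \leq N_f^{(y)}$. The symmetric argument gives $\Omega^{H_x,h_y} \subset \Omega^{h_x,h_y}$, and applying the same substitution to both coordinates yields the three inclusions in (b).

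For (c), I would characterize the hole $\Omega^{h_x,h_y}_{i,j}$ more transparently: because $k$ runs over $\{1,\dots,n_x-1\}$ and $l$ over $\{1,\dots,n_y-1\}$, its $x$-coordinates lie in the open interval $(iH_x,(i+1)H_x)$ and its $y$-coordinates in $(jH_y,(j+1)H_y)$. Since the intervals $(iH_x,(i+1)H_x)$ for distinct $i$ are pairwise disjoint (and similarly in $y$), the product cells are pairwise disjoint, which is exactly (c).

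For (d), one direction uses the same coordinate characterisation: a point in any hole has an $x$-coordinate that is not a multiple of $H_x$ and a $y$-coordinate that is not a multiple of $H_y$, hence it lies outside both $\Omega^{H_x,h_y}$ and $\Omega^{h_x,H_y}$. For the reverse inclusion I would use Euclidean division: every $i \in \{1,\dots,N_f^{(x)}\}$ writes uniquely as $i = i'n_x + k$ with $i' \in \{0,\dots,N_c^{(x)}\}$ and $k \in \{0,\dots,n_x-1\}$, and $k=0$ is precisely the condition $ih_x \in \{H_x,2H_x,\dots\}$, i.e.\ membership in the $\Omega^{H_x,\cdot}$ column. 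Doing the same along $y$, a fine point outside the skeleton must have both $k \geq 1$ and $l \geq 1$, so it lands in the hole indexed by $(i',j')$. The main obstacle, modest as it is, is keeping the boundary indices in line, in particular checking $i' \leq N_c^{(x)}$ and $j' \leq N_c^{(y)}$; both fall out of the identity $n_x N_c^{(x)} + (n_x-1) = N_f^{(x)}$ used above.
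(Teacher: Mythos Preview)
Your argument is correct and supplies precisely the index-range bookkeeping that makes each inclusion and disjointness claim rigorous. The paper, however, does not give a proof of this proposition at all: it merely states that the result ``is a consequence of the condition in equation~\eqref{eq:condition_meshes}'' and then moves on to interpret the items. So there is nothing to compare against beyond noting that your Euclidean-division decomposition $i = i'n_x + k$ and the open-interval description of the holes are exactly the natural way to cash out that one-line claim; your proof is what the paper implicitly leaves to the reader.
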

In Proposition \ref{prop:link_between_meshes}, item (a) show that the two anisotropic meshes are sub-meshes of the fine one, and item (b) shows that the coarse mesh is a sub-mesh of the fine mesh and the anisotropic meshes. Moreover, from item (a) it follows that the mesh $\Omega^{h_x,H_y} \cup \Omega^{H_x,h_y}$ is a \emph{non-uniform} sub-mesh of the fine one from which, by adding $\Omega^{h_x,h_y}_{holes}$, we obtain the fine mesh.\\
Indeed, equivalently to item (d), we have
$$\Omega^{h_x,h_y}=\left(\bigcup_{i,j=0}^{N^{(x)}_c,N^{(y)}_c}
\Omega^{h_x,h_y}_{i,j}\right)\cup\Omega^{h_x,H_y} \cup \Omega^{H_x,h_y}.
$$

Let $N_c^{(x)}=N_c^{(y)}=2,\ N_f^{(x)}=N_f^{(y)}=8$. Then Figure \ref{fig:meshes} shows a graphical representation of the meshes in equation \eqref{eq:mesh2D}. 

\begin{figure}
	\centering
	\vspace{3pt}
	\includegraphics[width=0.8\linewidth]{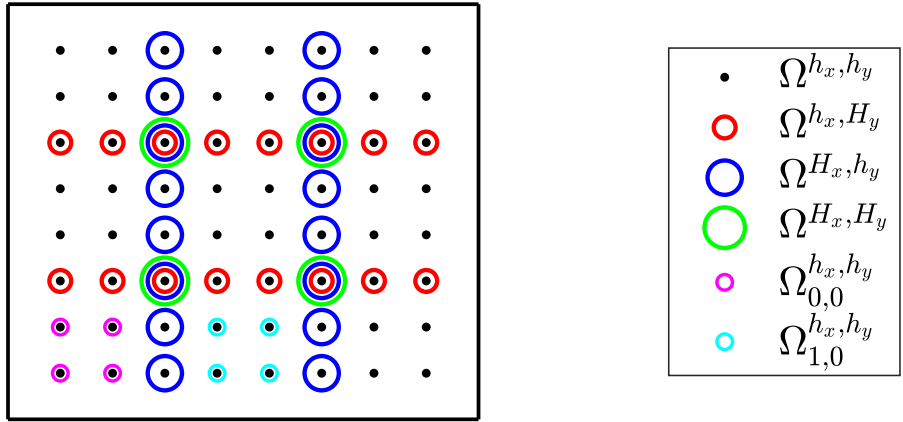}
	\caption{Graphical representation of the meshes.}\label{fig:meshes}
	\vspace{-0.4cm}
\end{figure}

\subsection{Projectors}
The ASDSM Algorithm aims to compute the discrete fine solution by solving equations over coarser meshes. In order to do this, we need to introduce the projectors that will allow us to transfer a discrete function from one mesh to another. These projectors will be essential for accurately transferring information between the different meshes used in our method.

We begin by introducing the set of all discrete functions defined over the fine mesh $\Omega^{h_x,h_y}$, in equation \eqref{eq:mesh2D}, as $V^{h_x,h_y}=\lbrace v\ |\ v\in\mathbb{R}^{\#\Omega^{h_x,h_y}}\rbrace$. Consider $v\in V^{h_x,h_y}$ to be an ordered vector $$v=[v_{1,1},v_{2,1},...,v_{N_f^{(x)},1},v_{1,2},...,v_{N_f^{(x)},2},...,v_{N_f^{(x)},N_f^{(y)}}]^\mathrm{T},$$
with $v_{i,j}$ being the value of the discrete function $v$ at the grid point $(x_i,y_j)\in \Omega^{h_x,h_y}$. We also define the sets $V^{h_x,H_y},V^{H_x,h_y},V^{H_x,H_y},V^{h_x,h_y}_{i,j}$ with similarly ordered elements. 
\begin{remark}\label{rem:Ordering}
	Clearly, for every position $m$ of $v\in V^{h_x,h_y}$ there exist unique indexes $i\in\lbrace 1,...,N_f^{(x)}\rbrace$ and $j\in\lbrace 1,...,N_f^{(y)}\rbrace$ such that $v_m=v_{i,j}$ with $(i-1)N_f^{y}+j=m$.
\end{remark}

\begin{definition}\label{def:projectors}
	Let us define the following projectors.
	\begin{itemize}
		\item[1)] $P_{ff}^{fc}: V^{h_x,h_y}\rightarrow V^{h_x,H_y}$, as the projector that maps vectors $v\in V^{h_x,h_y}$ into vector $w\in V^{h_x,H_y},\ w=P_{ff}^{fc}v$, by only keeping the components $v_{i,j}$ with $(x_i,y_j)\in\Omega^{h_x,H_y}$;
		Formally, from Remark \ref{rem:Ordering}, 
		\begin{equation*}
			\left(P_{ff}^{fc}\right)_{m,n}=
			\begin{cases}
				1\quad \text{if}\ \Omega^{h_x,h_y}\ni(x_{i(m)},y_{j(m)})=(x_{i(n)},y_{j(n)})\in\Omega^{h_x,H_y},\\
				0\quad \text{otherwise.}
			\end{cases}
		\end{equation*}
		\item[2)] $P_{ff}^{cf}: V^{h_x,h_y}\rightarrow V^{H_x,h_y}$, as the projector that maps vectors $v\in V^{h_x,h_y}$ into vector $w\in V^{H_x,h_y},\ w=P_{ff}^{cf}v$, by only keeping the components $v_{i,j}$ with $(x_i,y_j)\in\Omega^{H_x,h_y}$;
		\item[3)] $P_{ff}^{cc}: V^{h_x,h_y}\rightarrow V^{H_x,H_y}$, as the projector that maps vectors $v\in V^{h_x,h_y}$ into vector $w\in V^{H_x,H_y},\ w=P_{ff}^{cc}v$, by only keeping the components $v_{i,j}$ with $(x_i,y_j)\in\Omega^{H_x,H_y}$;
		\item[4)] $P_{ff}^{holes}: V^{h_x,h_y}\rightarrow V^{h_x,h_y}_{holes}$, as the projector that maps vectors $v\in V^{h_x,h_y}$ into vector $w\in V^{h_x,h_y}_{holes},\ w=P_{ff}^{holes}v$, by only keeping the components $v_{i,j}$ with $(x_i,y_j)\in\Omega^{h_x,h_y}_{holes}$;
	\end{itemize}
\end{definition}
Since the transpose of a projector switches domain with codomain, we note that:
\begin{itemize}
	\item[1)] $\left(P_{ff}^{fc}\right)^\mathrm{T}=P_{fc}^{ff}$;
	\item[2)] $\left(P_{ff}^{cf}\right)^\mathrm{T}=P_{cf}^{ff}$;
	\item[3)] $\left(P_{ff}^{cc}\right)^\mathrm{T}=P_{cc}^{ff}$;
	\item[4)] $\left(P_{ff}^{holes}\right)^\mathrm{T}=P_{holes}^{ff}$.
\end{itemize}
This allows us to retrieve the properties listed in Proposition \ref{prop:properties_proj}.
\begin{proposition}\label{prop:properties_proj}
	Let $I$ be the identity matrix of the opportune size. Then the projectors defined above have the following properties:
	\begin{itemize}
		\item[1)] $P_{ff}^{fc}\left(P_{ff}^{fc}\right)^\mathrm{T}=P_{ff}^{fc}P_{fc}^{ff}=I$;
		\item[2)] $P_{ff}^{cf}\left(P_{ff}^{cf}\right)^\mathrm{T}=P_{ff}^{cf}P_{cf}^{ff}=I$;
		\item[3)] $P_{ff}^{cc}\left(P_{ff}^{cc}\right)^\mathrm{T}=P_{ff}^{cc}P_{cc}^{ff}=I$;
		\item[4)] $P_{ff}^{holes}\left(P_{ff}^{holes}\right)^\mathrm{T}=P_{ff}^{holes}P_{holes}^{ff}=I$;
		\item[5)] Every projector has norm-1 and infinity norm equal to 1.
	\end{itemize}
\end{proposition}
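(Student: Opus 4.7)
The plan is to exploit the fact that each projector in Definition~\ref{def:projectors} is a binary selection matrix: a $0/1$ matrix whose rows are distinct standard basis vectors of the fine index space. Once this structural observation is made precise, the five items become essentially immediate consequences of elementary linear algebra.

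First I would verify the structural claim. Using the ordering introduced in Remark~\ref{rem:Ordering} together with the inclusions in Proposition~\ref{prop:link_between_meshes}, each of the target meshes $\Omega^{h_x,H_y}$, $\Omega^{H_x,h_y}$, $\Omega^{H_x,H_y}$, $\Omega^{h_x,h_y}_{holes}$ embeds injectively into the fine mesh $\Omega^{h_x,h_y}$. Consequently every row of each projector has exactly one entry equal to $1$ and all others equal to $0$, and distinct rows select distinct columns. I would write this out once in detail for $P_{ff}^{fc}$ using the explicit formula given in Definition~\ref{def:projectors}, and then remark that the other three projectors are handled identically by the analogous selection rule.

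For items $(1)$--$(4)$, the entry $(PP^\mathrm{T})_{i,j}$ equals the Euclidean inner product of rows $i$ and $j$ of $P$. By the structural claim, these rows are distinct standard basis vectors, so the inner product is $1$ when $i=j$ and $0$ otherwise. Hence $PP^\mathrm{T}=I$ for all four projectors, and I would present this argument once in a generic form rather than repeat it four times.

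For item $(5)$, I would split the claim in two. The infinity norm equals the largest absolute row sum, which is $1$ since every row of $P$ contains exactly one nonzero entry of magnitude $1$. For the spectral norm, the identity $PP^\mathrm{T}=I$ just established shows that the rows of $P$ are orthonormal, hence every nonzero singular value of $P$ equals $1$ and $\|P\|_2 = 1$. I do not anticipate a real obstacle here: the proposition is purely structural, and the only care required is to justify rigorously that the grid-point-to-vector-position correspondence is well defined and injective on each target mesh, which is a direct consequence of~\eqref{eq:condition_meshes} and Proposition~\ref{prop:link_between_meshes}.
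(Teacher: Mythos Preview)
The paper does not actually prove this proposition; it is stated immediately after Definition~\ref{def:projectors} and then the text moves on to the discretization, leaving the five items as direct consequences of the definition. Your proposal is therefore not competing against any argument in the paper, and the structural approach you outline---recognizing each projector as a $0/1$ selection matrix whose rows are distinct standard basis vectors, then reading off $PP^{\mathrm T}=I$ and the norm bounds---is exactly the kind of elementary justification the authors presumably had in mind.

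One small point of interpretation: in item~(5) the phrase ``norm-1'' almost certainly means the matrix $1$-norm (maximum absolute column sum), paired as it is with the infinity norm, rather than the spectral norm you treat. The fix is immediate with your own structural observation: since distinct rows of $P$ place their single nonzero entry in distinct columns, every column of $P$ has at most one entry equal to $1$, so $\|P\|_1=1$; by duality $\|P^{\mathrm T}\|_1=\|P\|_\infty=1$ and $\|P^{\mathrm T}\|_\infty=\|P\|_1=1$, which covers the transposed projectors $P_{fc}^{ff}$, $P_{cf}^{ff}$, $P_{cc}^{ff}$, $P_{holes}^{ff}$ as well. Your singular-value argument for $\|P\|_2=1$ is correct but answers a slightly different question than the one asked.
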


\subsection{Discretization}
The discretization of the PDE over the uniform meshes in \eqref{eq:mesh2D} is done through the second order accurate centered finite differences method, and yields the following linear systems.
\begin{equation}\label{eq:linear_systems}
	\begin{tabular}{ll}
		$A_{ff}u_{ff}=b_{ff},$ & $u_{ff}\in\mathbb{R}^{N^{(x)}_fN^{(x)}_f};$ \\
		$A_{fc}u_{fc}=b_{fc},$ & $u_{fc}\in\mathbb{R}^{N^{(x)}_fN^{(x)}_c};$ \\
		$A_{cf}u_{cf}=b_{cf},$ & $u_{cf}\in\mathbb{R}^{N^{(x)}_cN^{(x)}_f};$ \\
		$A_{cc}u_{cc}=b_{cc},$ & $u_{cc}\in\mathbb{R}^{N^{(x)}_cN^{(x)}_c};$ \\
		$A_{(ij)}u_{(ij)}=b_{(ij)},$ & $u_{(ij)}\in\mathbb{R}^{(n_x-1)(n_y-1)},\ i=1,...,n_x-1,\ j=1,...,n_y-1.$
	\end{tabular}
\end{equation}
In details,
\begin{equation*}
	\begin{split}
		A_{ff}=&\ \alpha\left(I_{N_f^{(y)}}\otimes \left(\frac{1}{h_x^2}L_{N_f^{(x)}}\right)+
		\left(\frac{1}{h_y^2}L_{N_f^{(y)}}\right) \otimes I_{N_f^{(x)}}\right) +\\
		&+\beta\left(I_{N_f^{(y)}}\otimes \left(\frac{1}{2h_x}D_{N_f^{(x)}}\right)+
		\left(\frac{1}{2h_y}D_{N_f^{(y)}}\right) \otimes I_{N_f^{(x)}}\right),\\
		b_{ff}=&[s_{1,1},s_{2,1},...,s_{N_f^{(x)},1},s_{1,2},...,s_{N_f^{(x)},N_f^{(y)}}]^{\mathrm{T}},
	\end{split}
\end{equation*}
where $\otimes$ is the tensor product, $s_{i,j}=s(x_i,y_j),\ (x_i,y_j)\in\Omega^{h_x,h_y}$ and 
$$D_N=\begin{bmatrix}
	0 & 1 & &&\\
	-1& 0 & 1 &  &\\
	& \ddots & \ddots & \ddots & \\
	&&-1&0&1\\
	& & & -1 & 0
\end{bmatrix}_{N\times N},\qquad
L_N=\begin{bmatrix}
	2 & -1 & &&\\
	-1& 2 & -1 &  &\\
	& \ddots & \ddots & \ddots & \\
	&&-1&2&-1\\
	& & & -1 & 2
\end{bmatrix}_{N\times N}.$$
The other matrices and known vectors in equation \eqref{eq:linear_systems} are defined similarly.

\begin{remark}\label{rem:sub_blocks_of_main_diagonal}
	Since $\Omega^{h_x,h_y}_{i,j}\subset \Omega^{h_x,h_y}$, then the coefficient matrix $A_{(ij)}$ is a main diagonal sub-block of matrix $A_{ff}$ if the same discretization scheme is applied.
\end{remark}

\section{The 2D ASDSM Algorithm}\label{sec:2D_ASDSM}
%
In this section, we present the ASDSM Algorithm and provide a detailed step-by-step explanation with figures. We consider the two-dimensional advection-diffusion equation, given in equation \eqref{eq:genericPDE}, with $\alpha=1$, $\beta=0$, i.e., the Poisson problem, and $g(x,y)$ and $s(x,y)$ computed from the exact solution $u(x,y)=\sin(x\pi)\sin(2y\pi)$. For the sake of clarity, we discretize the equation using a very coarse mesh with $N_f^{(x)}=N_f^{(y)}=24$ and $N_c^{(x)}=N_c^{(y)}=4$.

It is important to note that the inputs of the following algorithms are only the variables that change with each iteration. To improve readability, we do not include other inputs that are constant while iterating through the algorithm. Additionally, sometimes the step $k$ can be divided into two disjoint parts that can be executed concurrently, namely substeps $k.1)$ and $k.2)$.

\subsection{Computation of the initial guess}\label{subsec:initial_guess}
The core of our proposal is a peculiar way to compute an initial guess. This approach, outlined in Algorithm \ref{algorithm:InitialGuess}, consists in computing the solutions $u_{fc},u_{cf},u_{cc}$ of the anisotropic and coarse linear systems in equation \eqref{eq:linear_systems}, merging together the three solutions, thus forming the approximated \textit{coarse structure} or \textit{skeleton} of the fine solution $u_{ff}$, and finally filling in the empty regions.

\begin{algorithm}
	\caption{Initial Guess Builder}\label{algorithm:InitialGuess}
	\begin{algorithmic}
		\Function{InitialGuess}{$b_{ff},b_{fc},b_{cf},b_{cc}$}
		\State 1.1) $u_{fc}=(A_{fc})^{-1}b_{fc}$ \Comment{Solve the equation on $\Omega^{h_x,H_y}$}
		\State 1.2) $u_{cf}=(A_{cf})^{-1}b_{cf}$ \Comment{Solve the equation on $\Omega^{H_x,h_y}$}
		\If{*$b_{cc}$ is in input*} 
		\State 1.3) $u_{cc}=(A_{cc})^{-1}b_{cc}$ \Comment{Solve the equation on $\Omega^{H_x,h_y}$}
		\Else
		\State 2.1) $u_{fc}=\frac{u_{fc}}{\norm{u_{fc}}}$ \Comment{Normalize the vector}
		\State 2.2) $u_{cf}=\frac{u_{cf}}{\norm{u_{cf}}}$
		\EndIf
		\State 3) $\tilde{u}_{ff}=\text{Skeleton}\left(u_{fc},u_{cf},u_{cc}\right)$ \Comment{Builds the skeleton of the initial guess}
		\State 4) $\tilde{u}_{ff}=\text{Filler}\left(\tilde{u}_{ff},b_{ff}\right)$ \Comment{Fills the holes $\Omega^{h_x,h_y}_{i,j}$}
		\State\Return $\tilde{u}_{ff}$
		\EndFunction
	\end{algorithmic}
\end{algorithm}

\noindent In details, in step 1), the solutions $u_{fc},u_{cf},u_{cc}$ can be efficiently computed through ad-hoc solvers. For example, circulant preconditioners, incomplete LU preconditioners, and multigrid methods have already been shown to be valid preconditioners/solvers for structured anisotropic \cite{circanisotropic,iluanisotropic,mgmanisotropic} and isotropic \cite{circisotropic,iluisotropic,mgmisotropic} linear systems. \\
Note that, when computing the initial guess, the known term $b_{cc}$ is in input, hence the "if" condition is satisfied. The case where $u_{cc}$ is not provided, leading to skip step 1.3) and normalize the two solution in step 2), will be treated later.\\
After the computation of $u_{fc},u_{cf},u_{cc}$, respectively shown in red, blue, and green in Figure \ref{fig:solutions}, in step 3) we merge them into a unique unknown $\tilde{u}_{ff}$ through the Skeleton Builder Algorithm \ref{algorithm:Skeleton}. 

\begin{figure}[t]
	\centering
	\begin{subfigure}{0.48\textwidth}
		\includegraphics[width=1\linewidth]{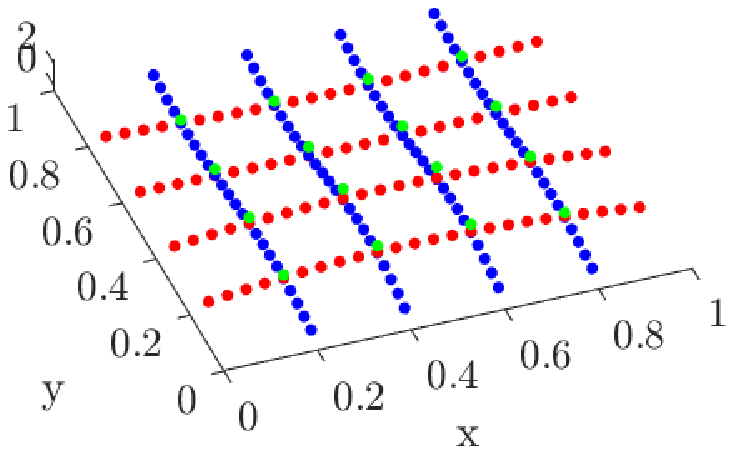}
		\caption{$u_{fc},u_{cf},u_{cc}$ in red, blue and green, respectively, over the meshes in Figure \ref{fig:meshes}.}
		\label{fig:solutions}
	\end{subfigure}\hfill
	\begin{subfigure}{0.48\textwidth}
		\includegraphics[width=1\linewidth]{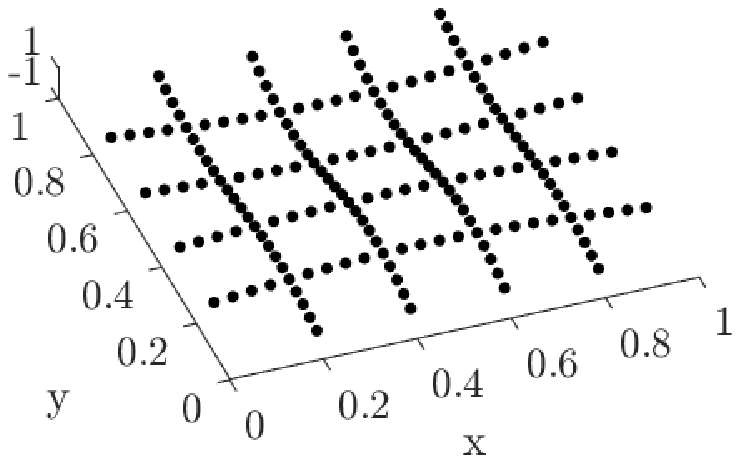}
		\caption{Skeleton of the surface over the mesh $\Omega^{h_x,H_y}\cup\Omega^{H_x,h_y}$ after the correction.}
		\label{fig:skeleton}
	\end{subfigure}
	\caption{Coarse structure before (a) and after (b) the merging process.}
\end{figure}

\begin{algorithm}
	\caption{Skeleton Builder}\label{algorithm:Skeleton}
	\begin{algorithmic}
		\Function{Skeleton}{$u_{fc},u_{cf},u_{cc}$}
		\State 1.1) $\tilde{u}_{cc}^{(1)}=P_{fc}^{cc}u_{fc}$ 
		\State 1.2) $\tilde{u}_{cc}^{(2)}=P_{cf}^{cc}u_{cf}$ \Comment{Project the anisotropic solutions over the coarse mesh}
		\If{*$u_{cc}$ is in input*} 	\Comment{Richardson Extrapolation (RE) can be used}
		\State 2) $\hat{u}_{cc}=$RE$\left(\tilde{u}_{cc}^{(1)},\tilde{u}_{cc}^{(2)},u_{cc}\right)$ \Comment{Extrapolated coarse solution}
		\Else
		\State 3) $\hat{u}_{cc}=\tilde{u}_{cc}^{(1)}\quad | \quad \hat{u}_{cc}=\tilde{u}_{cc}^{(2)}$ \Comment{Choose the coarse solution}
		\EndIf
		\State 4.1) $e_{cc}^{(1)}=\hat{u}_{cc}-\tilde{u}_{cc}^{(1)}$
		\State 4.2) $e_{cc}^{(2)}=\hat{u}_{cc}-\tilde{u}_{cc}^{(2)}$ \Comment{Compute the error with respect to $\hat{u}_{cc}$}
		\State 5.1) $\tilde{u}_{fc}=u_{fc}+$Spline$(e_{cc}^{(1)},\Omega^{h_x,H_y})$
		\State 5.2) $\tilde{u}_{cf}=u_{cf}+$Spline$(e_{cc}^{(2)},\Omega^{H_x,h_y})$ \Comment{Correct the two solutions}
		\State 6) $\tilde{u}_{ff}=P_{fc}^{ff}\tilde{u}_{fc}+P_{cf}^{ff}\tilde{u}_{cf}-P_{cc}^{ff}P_{cf}^{cc}\tilde{u}_{cf}$
		\State\Return $\tilde{u}_{ff}$
		\EndFunction
	\end{algorithmic}
\end{algorithm}
In Figure \ref{fig:solutions} we note that, due to the different meshes involved, the three solution do not perfectly overlap at the cross-points. Therefore, Algorithm \ref{algorithm:Skeleton} exploits Richardson extrapolation to compute accurate cross-points, then adjusts $u_{fc}$ and $u_{cf}$, and merges them together.\\
In details, in step 1) we project the two anisotropic solutions $u_{fc},u_{cf}$ over the coarse isotropic mesh $\Omega^{H_x,H_y}$ through the projectors $P_{fc}^{cc},P_{cf}^{cc}$. Then, since $u_{cc}$ is in input, in step 3) we use Richardson extrapolation \cite{richardsonextrapolation} to generate a more accurate solution $\hat{u}_{cc}$ over $\Omega^{H_x,H_y}$. 
\begin{remark}
	In order to further increase the accuracy of $\hat{u}_{cc}$, if the smoothness requirements are met, one can compute more solutions, cheaper than $u_{fc},u_{cf}$, e.g,  over the anisotropic meshes $\Omega^{kh_x,H_y}$ or $\Omega^{H_x,kh_y}$, for some  $k\in\mathbb{N}$.
	The more information, the higher the accuracy of the extrapolated solution.
\end{remark}

Now that the accurate cross-points have been computed, we have to adjust the two anisotropic solutions $u_{fc},u_{cf}$ into $\tilde{u}_{fc},\tilde{u}_{cf}$ such that $P_{fc}^{cc}\tilde{u}_{fc}=P_{cf}^{cc}\tilde{u}_{cf}=\hat{u}_{cc}$, i.e., their projection onto the coarse isotropic mesh coincides with the extrapolated cross-points $\hat{u}_{cc}$. To do so, in step 4) we compute the distance between $P_{fc}^{cc}u_{fc},P_{cf}^{cc}u_{cf}$ and $\hat{u}_{cc}$. Then, in step 5), we interpolate the distance with zero BCs to meshes $\Omega^{h_x,H_y},\Omega^{H_x,h_y}$ and add it to $u_{fc},u_{cf}$, respectively. By using high order spline interpolation this process yields smooth unknowns $\tilde{u}_{fc},\tilde{u}_{cf}$ with the aforementioned property. At this point in step 6) we project the corrected anisotropic unknowns to the fine mesh $\Omega^{h_x,h_y}$, we sum them together and finally we remove the grid points which are counted twice. The newly defined unknown $\tilde{u}_{ff}$ is called the \textit{skeleton} or \textit{coarse structure} of our initial guess. \\
As shown in Figure \ref{fig:skeleton}, the skeleton surface is zero over the so-called \textit{empty} regions or \textit{holes} $\Omega_{i,j}^{h_x,h_y}$ (in white) and it is smooth over the coarse structure $\Omega^{h_x,H_y}\cup\Omega^{H_x,h_y}$ (in black). This property allows for the efficient filling of the holes, as outlined in the sketch of the Filler Algorithm in \ref{algorithm:Filler}.

\begin{algorithm}
	\caption{Filler}\label{algorithm:Filler}
	\begin{algorithmic}
		\Function{Filler}{$\tilde{u}_{ff},A_{ff},b_{ff}$}
		\State 1) $c=-P_{ff}^{holes}A_{ff}\tilde{u}_{ff}$ \Comment{Use $\tilde{u}_{ff}$ as BCs}
		\State 2) $v=\left(P_{ff}^{holes}A_{ff}P_{holes}^{ff}\right)^{-1}\left(P_{ff}^{holes}b_{ff}+c\right)$
		\State 3) $\tilde{u}_{ff}=\tilde{u}_{ff}+P_{holes}^{ff}v$
		\State\Return $\tilde{u}_{ff}$
		\EndFunction
	\end{algorithmic}
\end{algorithm}

The filling process consists in using the skeleton $\tilde{u}_{ff}$ as BCs for new disjoint discrete PDEs, solve them in parallel and finally update the fine unknowns.\\
In details, due to the zero filling of the holes, the product $A_{ff}\tilde{u}_{ff}$ in step 1) projects the information on the boundaries (the skeleton) inside the holes. Then, projector $P_{ff}^{holes}$ removes the skeleton from $c$ by only keeping the empty regions, which now include the information regarding the boundaries. Hence, vector $c$ contains the BCs for the linear system defined over the holes.\\
In step 2) we project matrix $A_{ff}$ and the known term $b_{ff}$ over the fine mesh without the skeleton. The solution of the newly obtained linear system is the chaining of the solutions inside each one of the holes.\\
Note that matrix $P_{ff}^{holes}A_{ff}P_{holes}^{ff}$ has a block structure. According to Remark \ref{rem:sub_blocks_of_main_diagonal}, since the same discretization scheme is used, each block of $P_{ff}^{holes}A_{ff}P_{holes}^{ff}$ corresponds to $A_{(ij)}$ for some $i,j$. Therefore, the inversion of the projected matrix can be easily parallelized.\\
Finally, in step 3) we update the skeleton with the new information. As shown in Figure \ref{fig:filled_skeleton}, this process yields a continuous unknown (unless the time variable is involved).

\begin{figure}
	\centering
	\begin{subfigure}{0.48\textwidth}
		\includegraphics[width=1\linewidth]{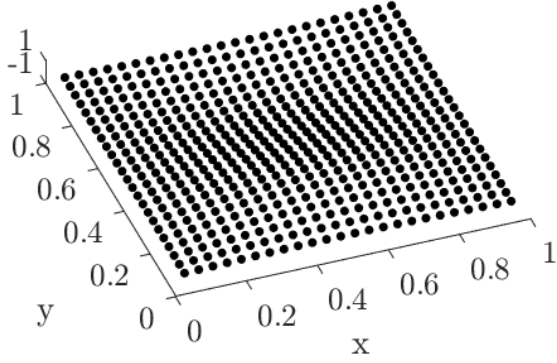}
		\caption{Initial guess obtained after the filling process of the skeleton.}
		\label{fig:filled_skeleton}
	\end{subfigure}\hfill
	\begin{subfigure}{0.48\textwidth}
		\includegraphics[width=1\linewidth]{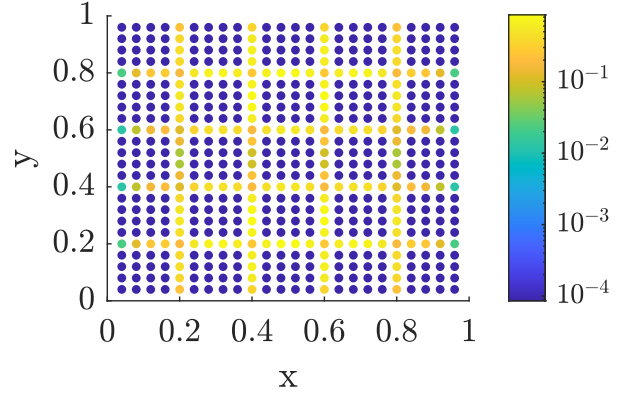}
		\caption{Reshaped residual of the initial guess.}
		\label{fig:residual}
	\end{subfigure}
	\caption{Initial guess (a) and respective residual (b).}
\end{figure}

Before giving the ASDSM algorithm, we summarize three properties of the Initial Guess Algorithm \ref{algorithm:InitialGuess}.
\begin{proposition}\label{prop:zero_res_in_holes}
	Let $u_{ff}=\left(A_{ff}\right)^{-1}b_{ff}$ be the numerical solution to the fine linear system in equation \eqref{eq:linear_systems} and let $u^{(0)}$ be the initial guess computed through Algorithm \ref{algorithm:InitialGuess}, with $r$ being the residual vector reshaped into a discrete surface. Then
	\begin{itemize}
		\item[1)] the unknown $u^{(0)}$ coincides with $u_{ff}$ if and only if the skeleton of $u^{(0)}$ coincides with the skeleton of $u_{ff}$;
		\item[2)] $r$ is zero over the holes and non-zero over its skeleton, unless $u^{(0)}=u_{ff}$;
		\item[3)] The skeleton of $r$ may be non-smooth near the coarse mesh points $\Omega^{H_x,H_y}$, i.e., the cross-points of the two anisotropic meshes.
	\end{itemize} 
\end{proposition}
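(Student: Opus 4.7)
The three claims are naturally addressed in order of difficulty, from the purely algebraic (2) to the structural (1) to the conditional (3).

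For part (2), my plan is a direct computation tracing what the Filler routine does. Writing $u^{(0)}=\tilde{u}_{ff}+P_{holes}^{ff}v$ with $v$ as in step~2 of Algorithm~\ref{algorithm:Filler}, applying $P_{ff}^{holes}$ to the residual $A_{ff}u^{(0)}-b_{ff}$, and using $P_{ff}^{holes}P_{holes}^{ff}=I$ from Proposition~\ref{prop:properties_proj}, the terms telescope to zero. Hence $r$ vanishes identically on the holes. For the second half of the statement, if $r$ were also zero on the skeleton then $A_{ff}u^{(0)}=b_{ff}$, and invertibility of $A_{ff}$ would force $u^{(0)}=u_{ff}$; contrapositively, whenever $u^{(0)}\neq u_{ff}$ the skeleton part of $r$ must carry non-zero entries.

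For part (1), the forward direction is immediate. For the converse, my plan is to show that $u_{ff}$ itself satisfies the linear system solved by the Filler when the input skeleton is the true one. Using the partition of $\Omega^{h_x,h_y}$ into holes and skeleton, one may write $u_{ff}=P_{holes}^{ff}P_{ff}^{holes}u_{ff}+\bar{s}_{ff}$, where $\bar{s}_{ff}$ denotes the zero-extended skeleton of $u_{ff}$; by hypothesis this equals the $\tilde{u}_{ff}$ produced by Algorithm~\ref{algorithm:Skeleton}. Multiplying $A_{ff}u_{ff}=b_{ff}$ on the left by $P_{ff}^{holes}$ and inserting this decomposition shows that $P_{ff}^{holes}u_{ff}$ solves exactly the Filler equation of step~2. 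The coefficient matrix $P_{ff}^{holes}A_{ff}P_{holes}^{ff}$ is invertible because, by Remark~\ref{rem:sub_blocks_of_main_diagonal}, it is block-diagonal with blocks $A_{(ij)}$; uniqueness then yields that the Filler reproduces $u_{ff}$ on the holes, and combined with the equality of skeletons we obtain $u^{(0)}=u_{ff}$.

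Part (3) is only a "may" statement, so it suffices to exhibit the mechanism responsible. My plan is to observe that the spline correction in steps~5.1)-5.2) of Algorithm~\ref{algorithm:Skeleton} is performed independently on $\Omega^{h_x,H_y}$ and $\Omega^{H_x,h_y}$ with zero Dirichlet data at the coarse mesh points. This guarantees matching values at the crossings in $\Omega^{H_x,H_y}$, but not matching one-sided derivatives: since $u_{fc}$ and $u_{cf}$ come from independent anisotropic solves, their tangential derivatives at a common coarse point generically disagree, and the residual $r$ inherits a kink through the stencil of $A_{ff}$. The main obstacle here will be confirming that this possibility is not cancelled by the Richardson extrapolation of step~3 of Algorithm~\ref{algorithm:Skeleton}, which adjusts only the point values at $\Omega^{H_x,H_y}$ and leaves the surrounding spline derivatives untouched; producing a concrete small example (for instance a $5\times 5$ coarse grid with an asymmetric source) would make the conclusion unambiguous.
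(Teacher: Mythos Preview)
Your treatments of parts (1) and (2) are correct and essentially identical to the paper's: for (2) the paper performs the same telescoping after writing $u^{(0)}=\tilde{u}_{ff}+P_{holes}^{ff}v$ and applying $P_{ff}^{holes}$, and for (1) it substitutes $\tilde{u}_{ff}=(I-P_{holes}^{ff}P_{ff}^{holes})u_{ff}$ into the Filler output and simplifies, which is exactly your decomposition $u_{ff}=P_{holes}^{ff}P_{ff}^{holes}u_{ff}+\bar{s}_{ff}$ rewritten. Your explicit appeal to Remark~\ref{rem:sub_blocks_of_main_diagonal} for invertibility of $P_{ff}^{holes}A_{ff}P_{holes}^{ff}$ is a detail the paper uses silently.

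For part (3) your proposed mechanism diverges from the paper's, and as stated it is somewhat muddled. The spline correction in steps~5.1)--5.2) does not impose ``zero Dirichlet data at the coarse mesh points''; rather it interpolates the coarse error $e_{cc}^{(j)}$ (with zero data only on $\partial\Omega$) so that $\tilde u_{fc},\tilde u_{cf}$ agree with $\hat u_{cc}$ at the crossings. More importantly, speaking of ``matching one-sided derivatives'' at a cross-point is unclear because the two skeleton branches are orthogonal: the horizontal branch carries $\partial_x\tilde u_{fc}$ and the vertical branch $\partial_y\tilde u_{cf}$, and there is no common tangential direction in which to compare them. The paper's explanation is instead a stencil argument: at a cross-point $(x_i,y_j)\in\Omega^{H_x,H_y}$ the five-point stencil of $A_{ff}$ touches only skeleton values, whereas at the adjacent skeleton points $(x_{i\pm1},y_j)$ or $(x_i,y_{j\pm1})$ the stencil reaches into two neighbouring holes, whose values were produced by local solves with inexact (skeleton) boundary data. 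This qualitative change in how $r$ is assembled, not a derivative mismatch in the skeleton itself, is what produces the jump near the coarse points. Your fallback plan of exhibiting a small explicit example is a perfectly valid way to certify a ``may'' statement, but if you want to explain \emph{why} the irregularity appears, the stencil argument is the right one.
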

\begin{proof}
	\textbf{Statement 1)} If $u^{(0)}$ coincides with $u_{ff}$, clearly the two skeletons coincide. Conversely, if the two skeletons coincide then the output of Algorithm \ref{algorithm:Skeleton} is
	\begin{equation}\label{eq_temp1}
		\tilde{u}_{ff}=\left(I-P_{holes}^{ff}P_{ff}^{holes}\right)u_{ff},
	\end{equation}
	i.e., $\tilde{u}_{ff}=0$ over the holes $\Omega^{h_x,h_y}_{holes}$ and $\tilde{u}_{ff}=u_{ff}$ over the skeleton $\Omega^{h_x,H_y}\cup\Omega^{H_x,h_y}$. Then $\tilde{u}_{ff}$ passes through Algorithm \ref{algorithm:Filler} yielding
	\begin{equation*}
		\begin{split}
			u^{(0)}&=\tilde{u}_{ff}+P_{holes}^{ff}\left(P_{ff}^{holes}A_{ff}P_{holes}^{ff}\right)^{-1}\left(P_{ff}^{holes}b_{ff}-P_{ff}^{holes}A_{ff}\tilde{u}_{ff}\right),
		\end{split}
	\end{equation*}
	and from equation \eqref{eq_temp1}
	\begin{equation*}
		\begin{split}
			u^{(0)}&=\tilde{u}_{ff}+P_{holes}^{ff}\left(P_{ff}^{holes}A_{ff}P_{holes}^{ff}\right)^{-1}\left(P_{ff}^{holes}A_{ff}P_{holes}^{ff}P_{ff}^{holes}u_{ff}\right)\\
			&=\tilde{u}_{ff}+P_{holes}^{ff}P_{ff}^{holes}u_{ff},
		\end{split}
	\end{equation*}
	which is equal to $u_{ff}$ and proves statement 1).
	
	\noindent	
	\textbf{Statement 2)} This is a consequence of Remark \ref{rem:sub_blocks_of_main_diagonal}. Indeed, consider $P_{ff}^{holes}r$, i.e., the projection of the residual over $\Omega_{holes}^{h_x,h_y}$. 
	Then, we have
	\begin{equation*}
		\begin{split}
			P_{ff}^{holes}r&=P_{ff}^{holes}\left(b_{ff}-A_{ff}u^{(0)}\right)\\
			&=P_{ff}^{holes}\left(b_{ff}-A_{ff}\left(  \tilde{u}_{ff}+P_{holes}^{ff}v  \right)\right)\\	
			&=P_{ff}^{holes}\left(b_{ff}-A_{ff}\left( 		\tilde{u}_{ff}+P_{holes}^{ff}\left(P_{ff}^{holes}A_{ff}P_{holes}^{ff}\right)^{-1}\left(P_{ff}^{holes}b_{ff}+c\right) \right)\right)\\
			&=P_{ff}^{holes}b_{ff}-P_{ff}^{holes}A_{ff}\tilde{u}_{ff}-P_{ff}^{holes}A_{ff}P_{holes}^{ff}\left(P_{ff}^{holes}A_{ff}P_{holes}^{ff}\right)^{-1}\left(P_{ff}^{holes}b_{ff}+c\right)\\
			&=P_{ff}^{holes}b_{ff}-P_{ff}^{holes}A_{ff}\tilde{u}_{ff}-P_{ff}^{holes}b_{ff}-c
		\end{split}
	\end{equation*}
	which is zero since $c=-P_{ff}^{holes}A_{ff}\tilde{u}_{ff}$. Hence, the residual over the holes is zero. Moreover, in the case where $u^{(0)}\neq u_{ff}$, the residual is non-zero and by the previous result it must be non-zero only over its skeleton.\\	
	\textbf{Statement 3)} It follows by the fact that the coarse points $(x_i,y_j)\in\Omega^{h_x,h_y}\cap\Omega^{H_x,H_y}$ are the cross points between 4 subdomains $\Omega^{h_x,h_y}_{i,j}$ for some $i,j$, where 4 solution are computed through the wrong BCs, since $u^{(0)}\neq u_{ff}$. 
	Therefore, if $(x_i,y_j)$ is a cross-point between the two anisotropic meshes, there may be a significant difference between $r_{i,j}$ (calculated from elements of the skeleton only) and $r_{i\pm 1,j}$ or $r_{i,j\pm 1}$ (calculated from elements of two domains).
\end{proof}

\begin{remark}\label{rem:different_discretization}
	In the proof of Proposition \ref{prop:zero_res_in_holes}, there is no explicit connection established between the structure of the residual and the discretization method used for the anisotropic linear systems. As a result, using an alternative discretization scheme would still yield a residual with the same sparse structure.
\end{remark}

\subsection{Iterative approach}\label{subsec:iterative_approacb}
The ASDSM Algorithm, summarized in Algorithm~\ref{algorithm:ASDSM}, consists in iterating the Initial Guess Algorithm~\ref{algorithm:InitialGuess} by exploiting the structure of the residual.\\

\begin{algorithm}
	\caption{ASDSM Algorithm}\label{algorithm:ASDSM}
	\begin{algorithmic}
		\Function{ASDSM}{}
		\State 1) $u^{(0)}=\textbf{InitialGuess}\left(b_{ff},b_{fc},b_{cf},b_{cc}\right)$ \Comment{Compute the initial guess}
		\State 2) $k=0$
		\While{$\|b_{ff}-A_{ff}u^{(k)}\|<$tol $\quad|\quad$ *residual stagnates*} 
		\State 3)\ \ $r_{ff}=b_{ff}-A_{ff}u^{(k)}$ \Comment{Compute the residual on $\Omega^{h_x,h_y}$}
		\State 4.1) $r_{fc}=P_{ff}^{fc}r_{ff}$ \Comment{Project the residual into $\Omega^{h_x,H_y}$}
		\State 4.2) $r_{cf}=P_{ff}^{cf}r_{ff}$ \Comment{Project the residual into $\Omega^{H_x,h_y}$}
		\State 5)\ $\tilde{e}_{ff}=\textbf{InitialGuess}\left(0\cdot b_{ff},r_{fc},r_{cf}\right)$ \Comment{Approximate the error}
		\State 6)\ $\hat{s}=\arg\underset{s\in \mathbb{R}^+}{\min}\|b_{ff}-A_{ff}\left(u^{(k)}+s\tilde{e}_{ff}\right)\|$
		\State 7)\ $u^{(k+1)}=u^{(k)}+\hat{s} \tilde{e}_{ff}$ \Comment{Apply the scaled correction}
		\State 8)\ $k=k+1$
		\EndWhile
		\State\Return $u^{(k)}$
		\EndFunction
	\end{algorithmic}
\end{algorithm}

\noindent After the computation of the initial guess, we compute the residual and iteratively apply corrections using Algorithm~\ref{algorithm:InitialGuess} to provide an initial guess of the error.\\
In details, we compute the residual in step 3) and observe that, as shown in Proposition \ref{prop:zero_res_in_holes} and depicted in Figure \ref{fig:residual}, it is concentrated on the skeleton of the solution. Therefore, in step 4) we project the residual into the anisotropic meshes $\Omega^{h_x,H_y}$ and $\Omega^{H_x,h_y}$ using the projectors $P_{ff}^{fc}$ and $P_{ff}^{cf}$, shown in Figures \ref{fig:res1} and \ref{fig:res1}, respectively. In step 5) we compute an approximate solution $\tilde{e}_{ff}$ of the error equation $A_{ff}e_{ff}=r_{ff}$ through the Inital Guess Algorithm \ref{algorithm:InitialGuess}.
However, we face two challenges in this process: the residual is not smooth enough to allow for the use of Richardson extrapolation, and the different scalings of the two anisotropic error equations $A_{fc}e_{fc}=r_{fc}$ and $A_{cf}e_{cf}=r_{cf}$ may lead to large differences in the norms of the solutions $e_{fc}$ and $e_{cf}$. 
Indeed, we observe that, as reported in Proposition \ref{prop:zero_res_in_holes} and depicted in Figures \ref{fig:res1} and \ref{fig:res2}, the residual surfaces show irregularities which lie close to the coarse mesh points of $\Omega^{H_x,H_y}$ (green), i.e., the cross points of the two anisotropic meshes. Moreover, looking at the $z$-axis in Figures \ref{fig:err1},\ref{fig:err2} we observe a slight difference in scale, which could get larger when increasing the mesh points.\\
Therefore, to address these issues, in step 3) of Algorithm \ref{algorithm:Skeleton}, we randomly choose between $P_{fc}^{cc}u_{fc}$ and $P_{cf}^{cc}u_{cf}$ as $\hat{u}_{cc}$, since $\hat{u}_{cc}$ cannot be extrapolated, while, in step 2) of Algorithm \ref{algorithm:InitialGuess}, we normalize the two solutions before constructing the skeleton. The normalization ensures that the two coarse solutions, which are both sub-samples of a finer solution, have approximately the same norm.\\
In the final step 6) of the main algorithm, we compute an optimal scaling of the update $\tilde{e}_{ff}$ such that it minimizes the residual. This step is needed to correctly scale the solution after the normalization and it is crucial to ensure a non-increasing residual.

\begin{remark}
	The scaling applied in step 7) of the ASDSM Algorithm \ref{algorithm:ASDSM} ensures that the algorithm cannot diverge, but it does not prevent stagnation. Moreover, since the error is computed though Algorithm \ref{algorithm:InitialGuess}, we infer that Proposition \ref{prop:zero_res_in_holes} holds true also for $u^{(k)}$, $\forall k\geq 0$.
\end{remark}

\begin{figure}
	\centering
	\begin{subfigure}{0.48\textwidth}
		\includegraphics[width=1\linewidth]{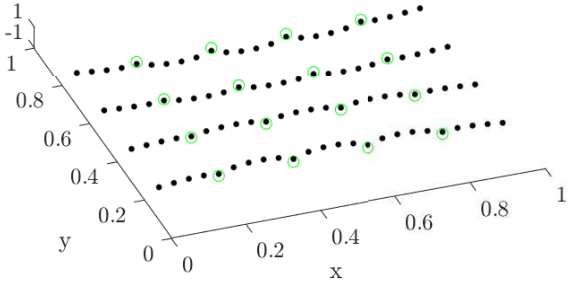}
		\caption{$r_{fc}$: residual $r_{ff}$ projected over the mesh $\Omega^{h_x,H_y}$.}
		\label{fig:res1}
	\end{subfigure}\hfill
	\begin{subfigure}{0.48\textwidth}
		\includegraphics[width=1\linewidth]{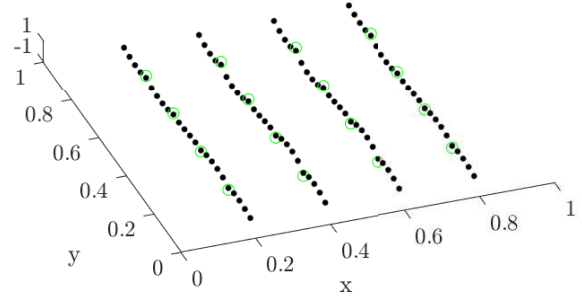}
		\caption{$r_{cf}$: residual $r_{ff}$ projected over the mesh $\Omega^{H_x,h_y}$.}
		\label{fig:res2}
	\end{subfigure}
	\caption{Residual projected over the two anisotropic meshes.}
\end{figure}

\begin{figure}
	\centering
	\begin{subfigure}{0.48\textwidth}
		\includegraphics[width=1\linewidth]{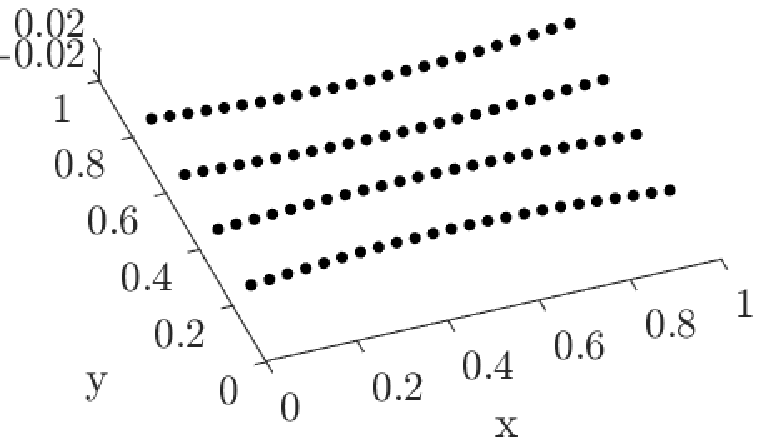}
		\caption{$e_{fc}$: error computed over the mesh $\Omega^{h_x,H_y}$.}
		\label{fig:err1}
	\end{subfigure}\hfill
	\begin{subfigure}{0.48\textwidth}
		\includegraphics[width=1\linewidth]{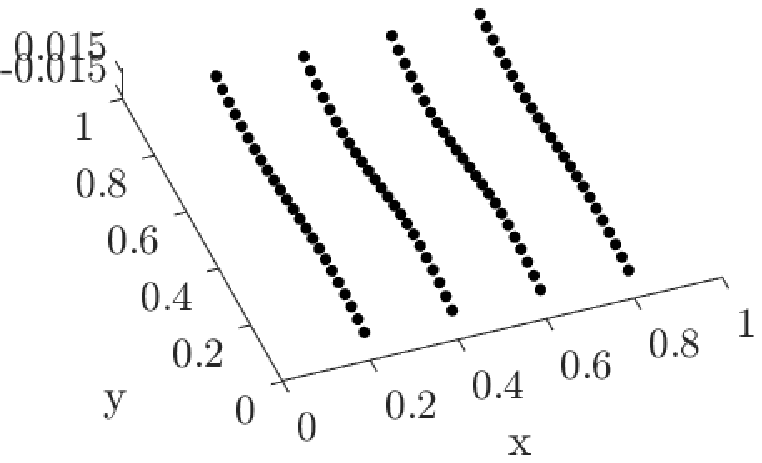}
		\caption{$e_{cf}$: error computed over the mesh $\Omega^{H_x,h_y}$.}
		\label{fig:err2}
	\end{subfigure}
	\caption{Solutions of the two anisotropic error equations.}
\end{figure}

\subsection{Efficient implementation}\label{subsec:Efficient_implementation_and_final_remarks}
In order to efficiently implement ASDSM, the following remarks should be considered.
\begin{itemize}
	\item The sparsity of the residual can be leveraged to decrease the computational cost by restricting the computations to the non-zero elements only;
	\item The computation of $\hat{s}$, which requires to solve an over-determined linear system with one unknown and $O\left( N_c^{(x)}N_f^{(x)}+N_c^{(y)}N_f^{(y)} \right)$ equations, can be made more efficient by considering a random subset of equations, rather than all of them;
	\item When computing the initial guess of the error, the filling process consists in solving many homogeneous PDEs concurrently. In some cases the solution of a homogeneous PDE can be found analytically, and this would essentially allow to eliminate the cost of the filling process;
	\item The inversion of the block matrix in Step 2) of Algorithm \ref{algorithm:Filler} can be done through matrix free approaches \cite{matrixfree};
	\item As a consequence of Proposition \ref{prop:zero_res_in_holes}, smart memory management can be used to reduce the storage requirements for the iterative unknown $u^{(k)}$ by only storing the skeleton and elements close to it, rather than all $N_f^{(x)}N_f^{(y)}$ components. This can bring the storage requirements down to $O\left(N_c^{(x)}N_f^{(y)}+N_c^{(y)}N_f^{(x)}\right)$.
\end{itemize}

\subsection{Extension to different discretizations and PDEs}
In case of higher order PDEs or higher accuracy order of the discretization schemes, the discretization process would yield banded coefficient matrices with bandwidth larger than 3, and a consequent increase in BCs. This can create difficulties in properly filling the holes $\Omega^{h_x,h_y}_{holes}$, as the linear system obtained from the discretization of the PDE over each hole may require more than one BC. 
One potential solution is to use different discretization methods near the edges of the domain. For example, through the Taylor expansion we obtain the following $3$-rd order accurate finite difference discretizations of the second order derivative
\begin{equation*}
	\begin{split}
		u''(x_i)&=\frac{-u_{i-2}+16u_{i-1}-30u_i+16u_{i+1}-u_{i+2}}{12h^2}+O(h^3);\\
		u''(x_i)&=\frac{10u_{i-1}-15u_i-4u_{i+1}+14u_{i+2}-6u_{i+3}+u_{i+4}}{12h^2}+O(h^3);\\
		u''(x_i)&=\frac{u_{i-4}-6u_{i-3}+14u_{i-2}-4u_{i-1}-15u_i+10u_{i+1}}{12h^2}+O(h^3).
	\end{split}
\end{equation*}
Therefore, we could define the linear systems in the holes through the above equalities such that only one BC is required. Then, to ensure that Remark \ref{rem:sub_blocks_of_main_diagonal} holds true, we build the fine coefficient matrix $A_{ff}$ such that its sub-blocks coincide with the linear systems in the holes. Since Remark \ref{rem:sub_blocks_of_main_diagonal} is essential to the proof of Proposition \ref{prop:zero_res_in_holes}, it is then theoretically possible to extend ASDSM towards higher order PDEs or higher order accuracy discretization schemes.


\subsection{Extension to the 3D case}\label{subsec:extension_3D}
In the 3D case, we aim to solve PDE \eqref{eq:genericPDE} over a fine 3D uniform mesh. By extending the notation introduced in Section \ref{subsec:meshes}, we denote such mesh by $\Omega^{h_x,h_y,h_z}$, with $h_x,h_y,h_z$ being the grid widths in each dimension. An optimal way to extend ASDSM would be to discretize the PDE over four different meshes: three strongly anisotropic meshes $\Omega^{H_x,h_y,h_z},\Omega^{h_x,H_y,h_z},\Omega^{h_x,h_y,H_z}$ and one coarse isotropic mesh $\Omega^{H_x,H_y,H_z}$. However, as illustrated in Figure \ref{fig:fig3D_holes1}, the problem lies in the nested meshes. In this case, the cubic empty regions only have of 8 grid points as BC, i.e., the corners of the cube, and there is no solution provided in the faces. Therefore, the obtained skeleton cannot be used as BC for new disjoint subproblems.\\
One potential solution is to consider the denser anisotropic meshes, such as $\Omega^{H_x,h_y,h_z},\Omega^{h_x,H_y,h_z},\Omega^{h_x,h_y,H_z}$. In Figure \ref{fig:fig3D_holes2}, it can be observed that the use of denser meshes results in well-defined BCs for the disjoint subproblems within the holes. However, it is important to note that this approach also leads to a significant increase in computational cost due to the increased density of the meshes.

\begin{figure}
	\centering
	\begin{subfigure}[b]{0.48\textwidth}
		\includegraphics[width=1\linewidth]{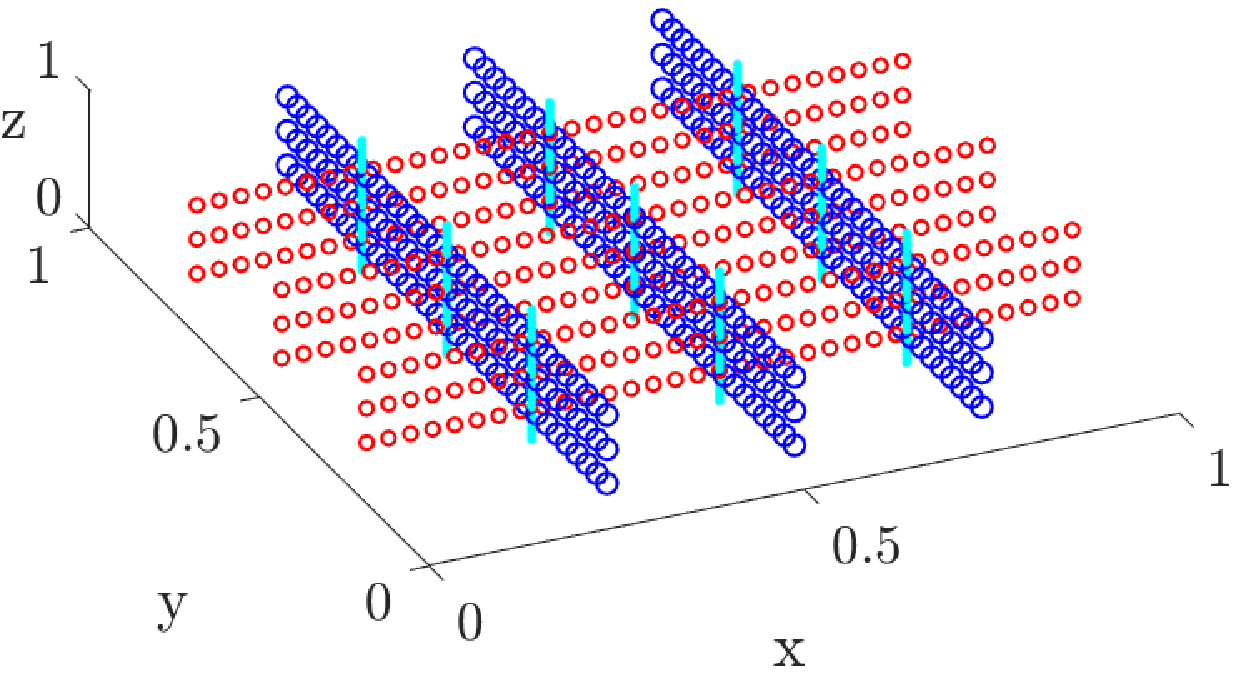}
		\caption{Meshes $\Omega^{h_x,H_y,H_z}$ in red, $\Omega^{H_x,h_y,H_z}$ in blue, $\Omega^{H_x,H_y,h_z}$ in cyan.}
		\label{fig:fig3D_holes1}
	\end{subfigure}
	\hfil
	\begin{subfigure}[b]{0.48\textwidth}
		\includegraphics[width=1\linewidth]{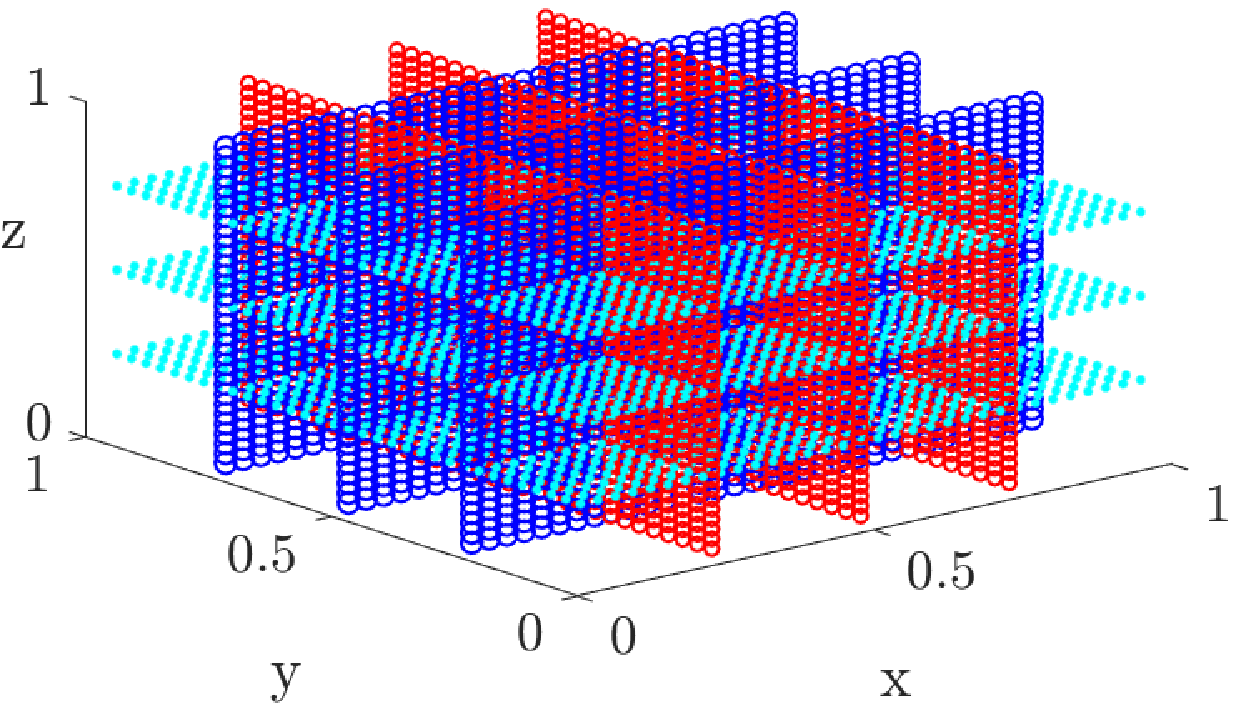}
		\caption{Meshes $\Omega^{H_x,h_y,h_z}$ in red, $\Omega^{h_x,H_y,h_z}$ in blue, $\Omega^{h_x,h_y,H_z}$ in cyan.}
		\label{fig:fig3D_holes2}
	\end{subfigure}
	\caption{Different choice of the anisotropic meshes.}
\end{figure}

In the numerical results section, when addressing the 3D case, we will implement ASDSM by iterating over the meshes $\Omega^{H_x,h_y,h_z},\Omega^{h_x,H_y,h_z},\Omega^{h_x,h_y,H_z}$ with $\Omega^{H_x,H_y,H_z}$ as the coarse intersection mesh. Note that, unlike the 2D case, the mesh $\Omega^{H_x,H_y,H_z}$ does not contain all the intersection points among the previous meshes. For example, $\Omega^{h_x,H_y,H_z}=\Omega^{h_x,h_y,H_z}\cap \Omega^{h_x,H_y,h_z}$. Therefore, in order to ensure a well-defined merging process, we also extrapolate the other intersection points.

\section{Numerical results}\label{sec:Results}
This section presents numerical results evaluating the performance of ASDSM. The implementation is in Matlab 2020b and the tests are run on a machine with AMD Ryzen 5950X processor and 64GB of RAM. \\
The algorithms used in this section are available in \cite{algorithms}.
In the following examples, we set $N_c^{(x)}=N_c^{(y)}=N_c^{(z)}=N_c$ and $N_f^{(x)}=N_f^{(y)}=N_f^{(z)}=N_f$. Moreover, each example will tests ASDSM for solving \eqref{eq:genericPDE} or \eqref{eq:genericPDE_Time} under two scenarios: one where the solution is smooth, and the second with an oscillatory solution. As a results, we use the notation $r_k^{(1)},r_k^{(2)}$ to represent the residual at the $k$-th iteration of ASDSM for solving the PDE under the first and second scenarios, respectively. Note that, when $k=0$, $r_0^{(j)},\ j=1,2$ represents the residual of the initial guess computed through Algorithm \ref{algorithm:InitialGuess}.\\

\textit{Example 1:} Here we test ASDSM for solving the two-dimensional steady state advection-diffusion equation \eqref{eq:genericPDE} with the following settings:
\begin{itemize}
	\item[1)] $\alpha_1=\alpha_2=\beta_1=\beta_2=1$ and functions $s,g$ computed from the exact solution $u(x,y)=\sin(x\pi+y\pi)$;
	\item[2)] $\alpha_1=1+x^2,\ \alpha_2=2+xy,\ \beta_1=2-x,\ \beta_2=1+y$ and functions $s,g$ computed from the exact solution $u(x,y)=\sin(4x\pi+4y\pi)$.
\end{itemize}
We note that settings 1) and 2) differ from the choice of coefficients $\alpha_i,\beta_i$, which in 1) are constants and equal, and in 2) they are taken as positive functions. Additionally, the solution in setting 2) is more oscillatory than in setting 1), making the problem harder to solve, especially in the case of extremely anisotropic meshes, as more points are needed to better approximate the solution.\\
Figures \ref{fig:2D_space_1} and \ref{fig:2D_space_2}, respectively, show the residuals $r_k^{(1)},r_k^{(2)}$ at the iteration $k=0,...,20$ for the two settings described above. The dotted, dashed and circled lines represent $N_c=5,10,20$ respectively, while the red, green, blue and magenta lines represent $N_f=400,800,1600,3200$. In each case, the size of the linear system being solved is $N_f\times N_f$, therefore, different type of lines with the same color represent the residual of ASDSM while solving the same linear system. This is because $N_c$ is only needed in ASDSM to define the size of the anisotropic linear systems. Hence, larger $N_c$ leads to a more computationally expensive iteration of ASDSM.

\begin{figure}
	\centering
	\begin{subfigure}[b]{0.48\textwidth}
		\includegraphics[width=1\linewidth]{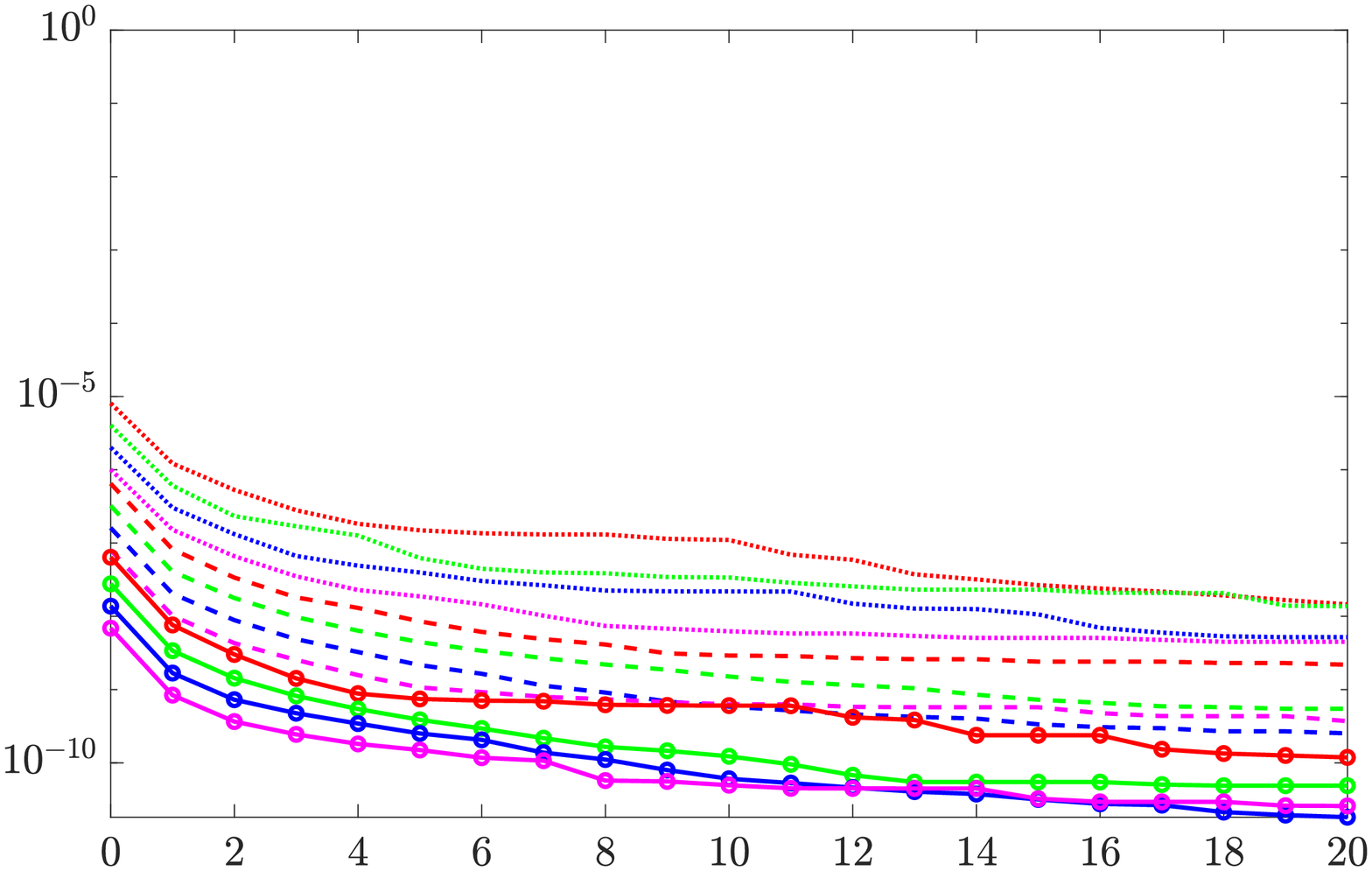}
		\caption{Setting 1)}
		\label{fig:2D_space_1}
	\end{subfigure}
	\hfil
	\begin{subfigure}[b]{0.48\textwidth}
		\includegraphics[width=1\linewidth]{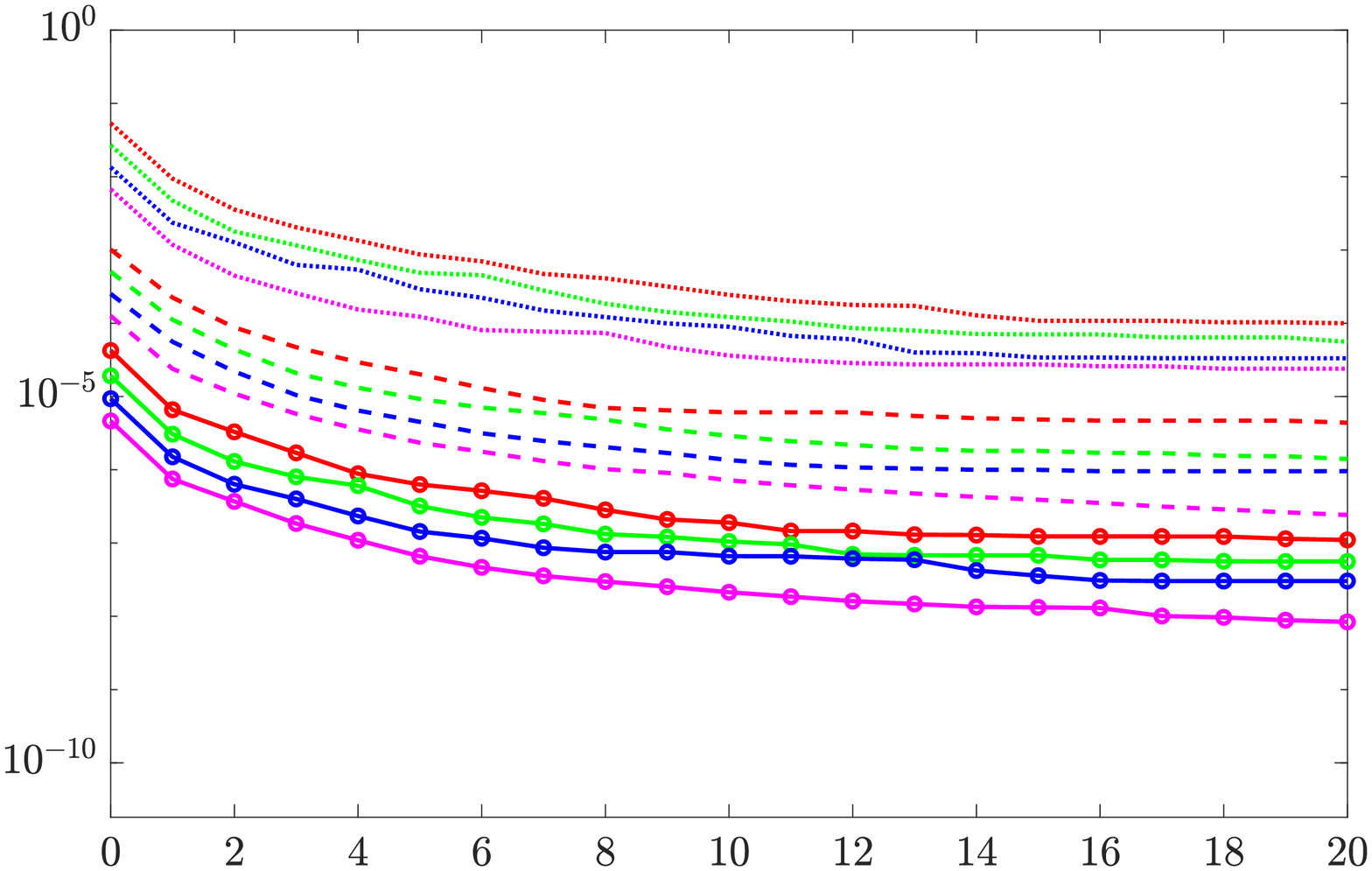}
		\caption{Setting 2)}
		\label{fig:2D_space_2}
	\end{subfigure}
	\caption{Residuals $r_k^{(1)},r_k^{(2)}$ at each iteration of ASDSM applied to equation~\eqref{eq:genericPDE} for the two settings. The dotted, dashed and circled lines represent $N_c=5,10,20$ respectively, while the red, green, blue and magenta lines represent $N_f=400,800,1600,3200$.}
	\label{fig:2D_space}
\end{figure}

In Figures \ref{fig:2D_space_1}, where setting 1) is considered, it can be observed that the Richardson extrapolation inside the initial guess algorithm leads to a strong initial boost, resulting in a very small $r_0^{(1)}$ for every combination of $N_c,N_f$. Then, the first iteration of ASDSM further strongly reduce the residual by approximately one order of magnitude. However, after the first iteration, the reduction factor decreases and seems to converge to $1$ as $k$ increases, leading to stagnation. One possible cause for this is the low accuracy of the anisotropic linear system. As ASDSM gets closer to the solution, the initial guess of the error becomes less and less accurate. 
Indeed, we note that by fixing $N_f$ and increasing $N_c$, i.e., reducing the anisotropy and increasing accuracy at the same time, the stagnation point lowers. Another cause may be related to the smoothness of the residual (see Example 2 for further details).\\
Furthermore, when fixing $N_c$ and increasing $N_f$, which increases the anisotropy, the stagnation point still appears to decrease. This suggests that increasing the size of main linear system, and consequently the accuracy of its solution, improves the performance of ASDSM.

When considering setting 2), in Figure \ref{fig:2D_space_2} we note a similar behavior of $r_k^{(2)}$ with respect to $r_k^{(1)}$. The main difference lies in the starting point at $k=0$. In this case, of an oscillatory solution, the extrapolated cross-points are less accurate than in the previous case. Therefore, the initial guess algorithm yields a less accurate solution. Another loss of accuracy may come from the use of variable coefficients $\alpha_i,\beta_i,\ i=1,2$ in place of constant ones used in setting 1). Indeed, when $N_c=5$ the residual $r_0^{(1)}$ is almost 4 orders of magnitude smaller than $r_0^{(2)}$, and 3 orders of magnitude when $N_c=10,20$.\\

\begin{figure}
	\centering
	\begin{subfigure}[b]{0.48\textwidth}
		\includegraphics[width=1\linewidth]{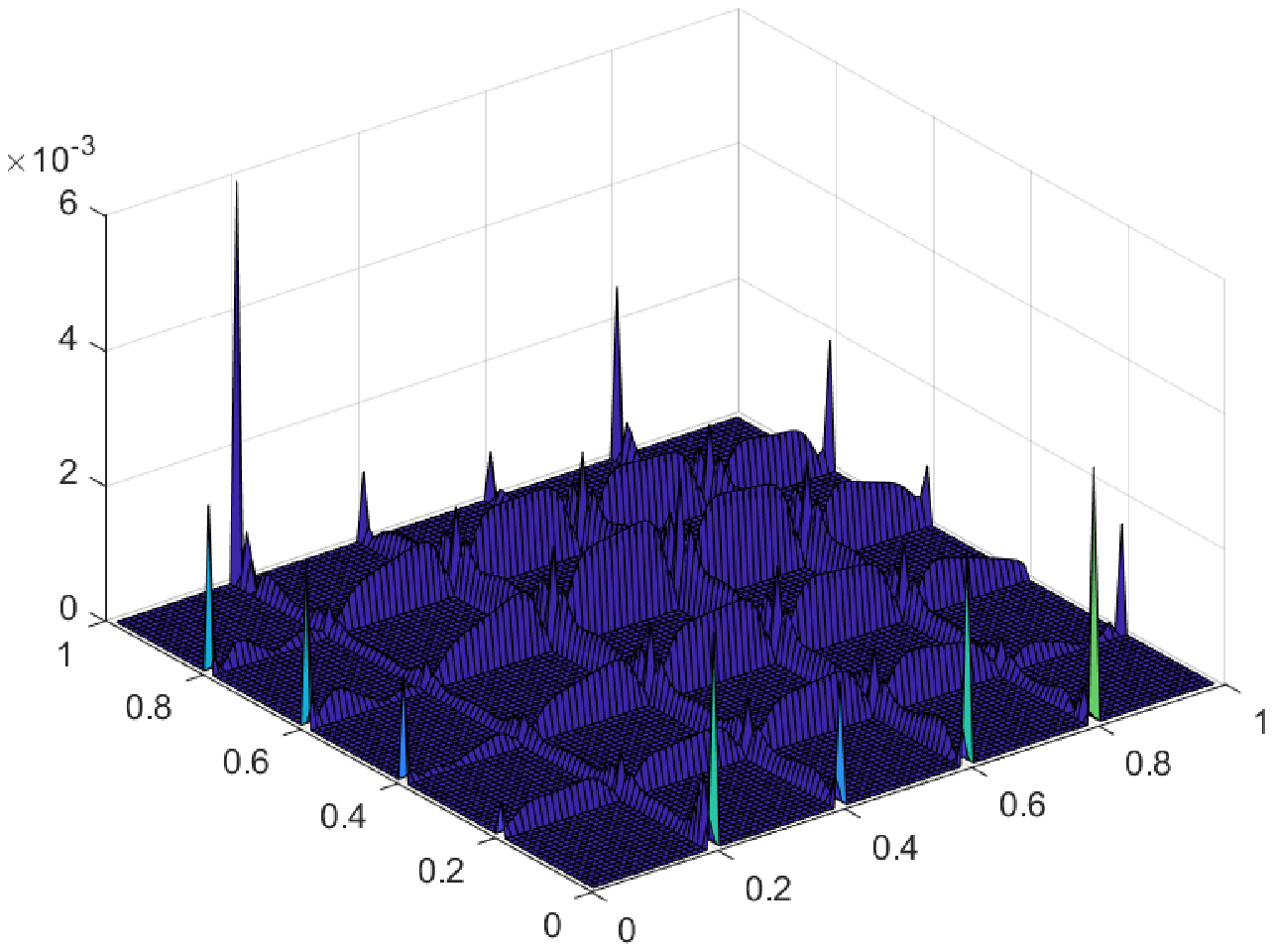}
		\caption{Steady state PDE 1)}
		\label{fig:irreg_res_space}
	\end{subfigure}
	\hfil
	\begin{subfigure}[b]{0.48\textwidth}
		\includegraphics[width=1\linewidth]{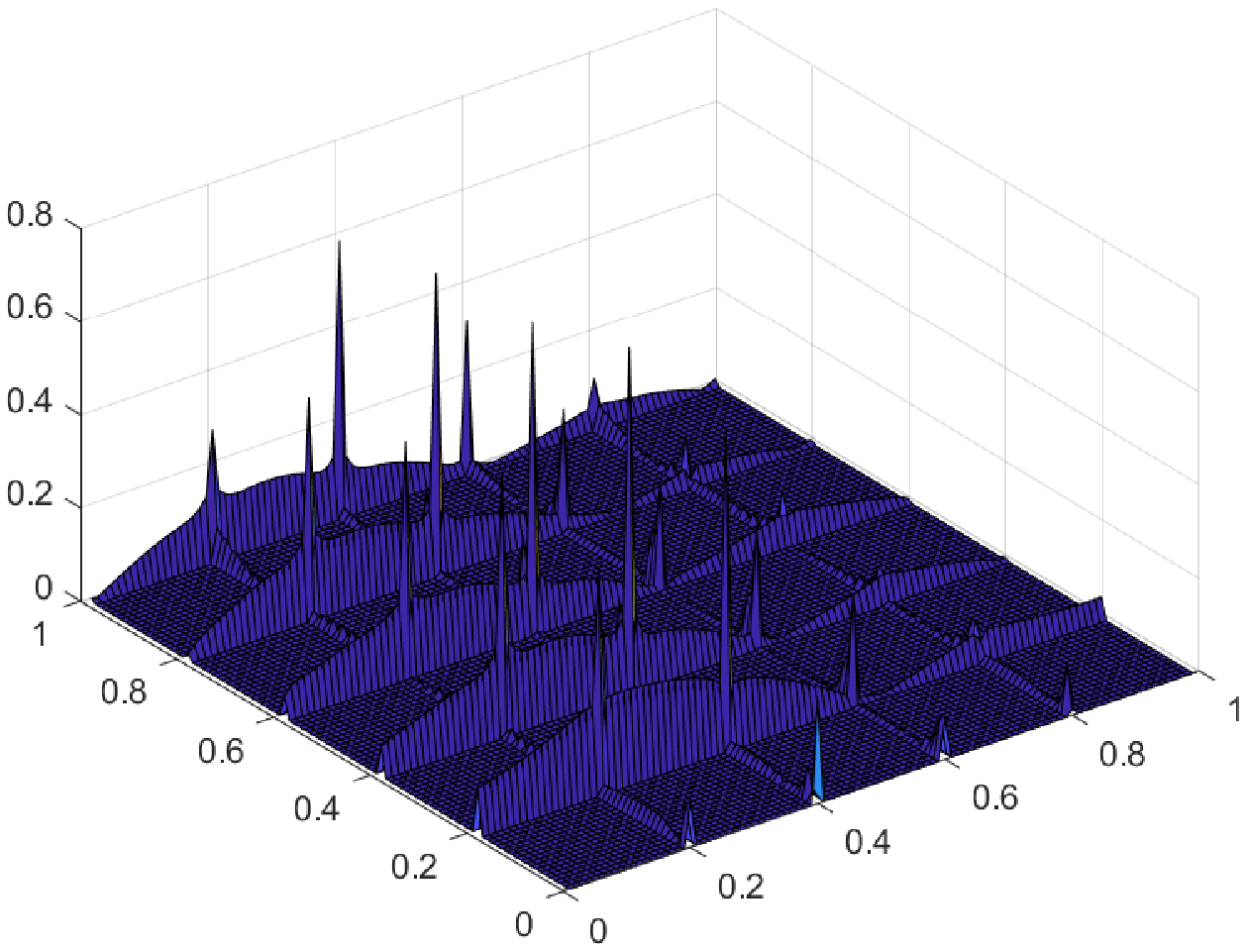}
		\caption{Time-dependent PDE 2)}
		\label{fig:irreg_res_time}
	\end{subfigure}
	\caption{Reshaped absolute residual after 20 iterations of ASDSM}
	\label{fig:irreg_res}
\end{figure}

\textit{Example 2:}
In this example we show an important drawback of ASDSM, which consists in making the residual highly irregular at the coarse points. Consider the following two toy problems:
\begin{itemize}
	\item[1)] 2-dimensional space steady state advection diffusion equation \eqref{eq:genericPDE} with $\alpha_i=\beta_i=1,\ i=1,2$ and $s,g$ computed from the exact solution $u(x,y)=\sin(x\pi+y\pi)$;
	\item[2)] 1-dimensional space time-dependent advection diffusion equation \eqref{eq:genericPDE_Time} with $\alpha_1=\beta_1=1$ and $s,g$ computed from the exact solution $u(x,t)=\sin(x\pi+t\pi)$.
\end{itemize} 
In Figure \ref{fig:irreg_res} we show the reshaped residual obtained after iterating ASDSM 20 times, i.e., when ASDSM stagnates. In both cases, strong irregularities can be observed at the coarse points. In the time-dependent case, less severe irregularities are also observed along the mesh dense in time $\Omega^{H_x,h_t}$. This results in highly irregular projected residuals in steps 4.1) and 4.2) of the ASDSM Algorithm \ref{algorithm:ASDSM}, leading to inaccurate solutions in steps 1.1) and 1.2) of Algorithm \ref{algorithm:InitialGuess} which in turn causes the stagnation.\\
Additional tests, not presented in this paper, have shown that an artificial smoothing of the residual reduces the stagnation point.

\begin{figure}
	\centering
	\begin{subfigure}[b]{0.48\textwidth}
		\includegraphics[width=1\linewidth]{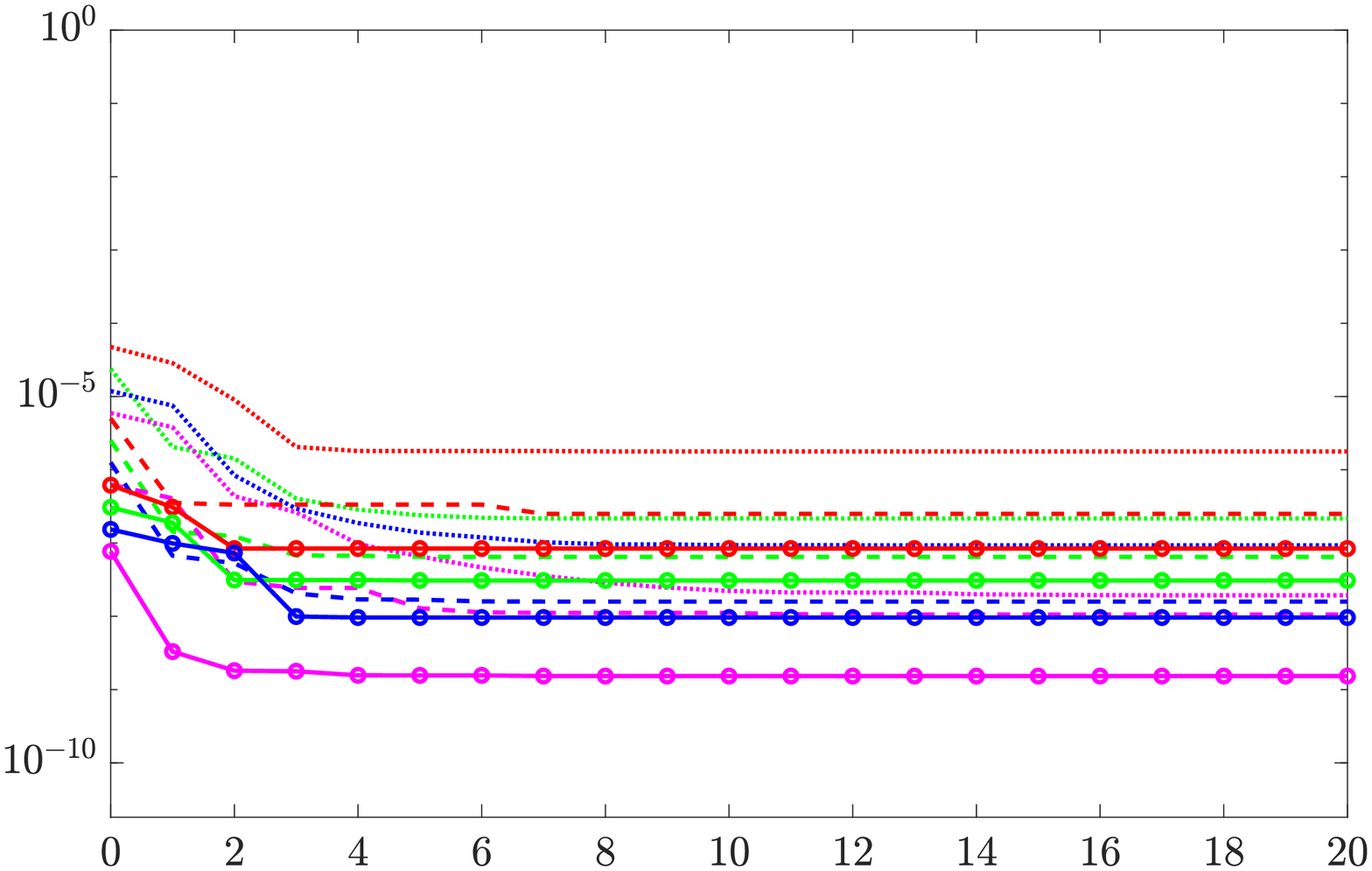}
		\caption{Setting 1)}
		\label{fig:2D_time_1}
	\end{subfigure}
	\hfil
	\begin{subfigure}[b]{0.48\textwidth}
		\includegraphics[width=1\linewidth]{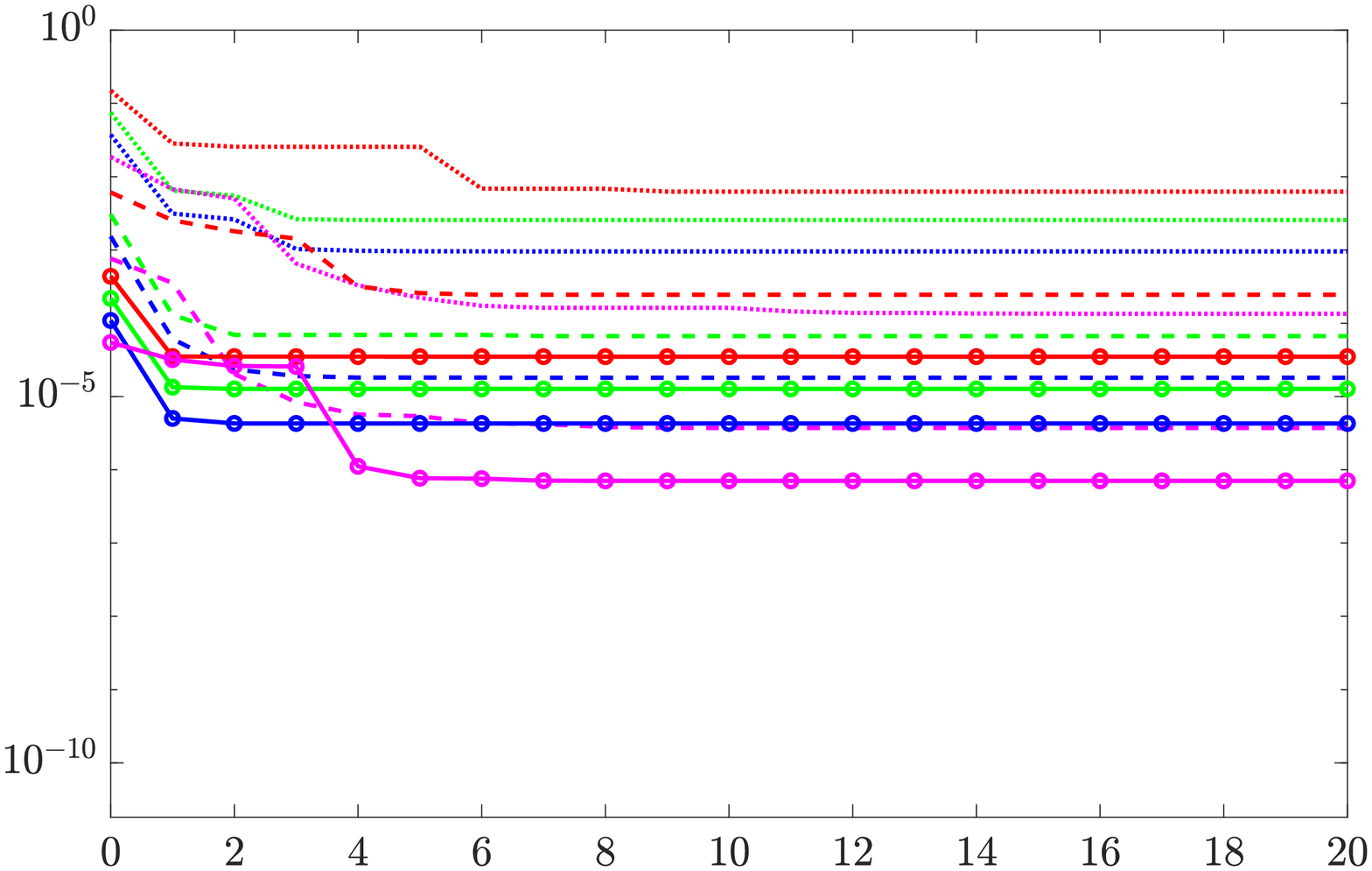}
		\caption{Setting 2)}
		\label{fig:2D_time_2}
	\end{subfigure}
	\caption{Residuals $r_k^{(1)},r_k^{(2)}$ at each iteration of ASDSM applied to equation~\eqref{eq:genericPDE_Time} for the two settings. The dotted, dashed and circled lines represent $N_c=5,10,20$ respectively, while the red, green, blue and magenta lines represent $N_f=400,800,1600,3200$.}
	\label{fig:2D_time}
\end{figure}

\textit{Example 3:} Here we test ASDSM for solving the one-dimensional space time-dependent advection-diffusion equation \eqref{eq:genericPDE_Time} with the following settings:
\begin{itemize}
	\item[1)] $\alpha_1=\beta_1=1$ and functions $s,g$ computed from the exact solution $u(x,t)=\sin(x\pi+t\pi)$;
	\item[2)] $\alpha_1=1+x^2,\ \beta_1=2-x$ and functions $s,g$ computed from the exact solution $u(x,t)=\sin(4x\pi+4t\pi)$.
\end{itemize}
Note that the two settings, which involve a smooth and an oscillatory solution, are the same as in Example 1, but with the variable $t$ replacing $y$.\\
With the same notation as in Example 1, Figures \ref{fig:2D_time_1} and \ref{fig:2D_time_2} show the absolute residuals $\abs{r_k^{(1)}},\abs{r_k^{(2)}}$ at the $k$-th iteration, respectively. A comparison between Figures \ref{fig:2D_time} and \ref{fig:2D_space} shows that, in general, the same observations made in Example 1 apply to this time-dependent case as well. However, the main difference is in the trend of the residual, which in Figure \ref{fig:2D_time} is shown to be less smooth than in Figure \ref{fig:2D_space}. The reason for such behavior lies in step 3) of the Skeleton builder Algorithm \ref{algorithm:Skeleton}, where the cross-points of the error surface cannot be extrapolated, and a random solution is taken as ``the most accurate". In this case, the solution $e_{H_x,h_t}$ computed over the dense in time mesh $\Omega^{H_x,h_t}$ is more accurate than $e_{h_x,H_t}$ computed over the dense in space mesh $\Omega^{h_x,H_t}$. Therefore, when the randomly chosen cross-points are the ones of error $e_{H_x,h_t}$, the residual strongly reduces. However, this large reduction in residual only seems to happen a couple of times before stagnation occurs for the same reasons reported in Example 1.\\

\textit{Example 4:} Here we test ASDSM for solving three-dimensional steady state advection-diffusion equation \eqref{eq:genericPDE} with the following settings:
\begin{itemize}
	\item[1)] $\alpha_i=\beta_i=1,\ i=1,2,3$ and functions $s,g$ are computed from the exact solution $u(x,y,z)=\sin(x\pi+y\pi+z\pi)$;
	\item[2)] $\alpha_1=1+x^2,\ \alpha_2=2+xy,\ \alpha_3=3-xyz,\ \beta_1=2-x,\ \beta_2=1+y,\ \beta_3=2-x+yz$ and functions $s,g$ are computed from the exact solution $u(x,t)=\sin(4x\pi+4y\pi+4z\pi)$.
\end{itemize}
The smooth and oscillatory solutions of settings 1) and 2) are extensions of the settings in Exercise 1, with the addition of the time variable $t$. In Figures \ref{fig:3D1},\ref{fig:3D2} we respectively show the residuals $r_k^{(1)},r_k^{(2)}$ at the $k$-th iteration of ASDSM applied to PDE \eqref{eq:genericPDE} with settings 1) and 2). To reduce the computational time of the tests, the results for $N_c=20$, previously reported as the circled line, are not provided and the study is restricted to $N_f\in\lbrace 70,150,300\rbrace$, represented by colors red, green and blue, respectively.\\
From Figure \ref{fig:3D} we can observe that even though the 3-dimensional case is more complex from an algorithmic point of view, and also involves more cross-points, the behavior of the residual is similar to that of Example 1. Therefore, the same observations apply and it is expected that similar results will be obtained for higher values of $N_c,N_f$.

\begin{figure}
	\centering
	\begin{subfigure}[b]{0.48\textwidth}
		\includegraphics[width=1\linewidth]{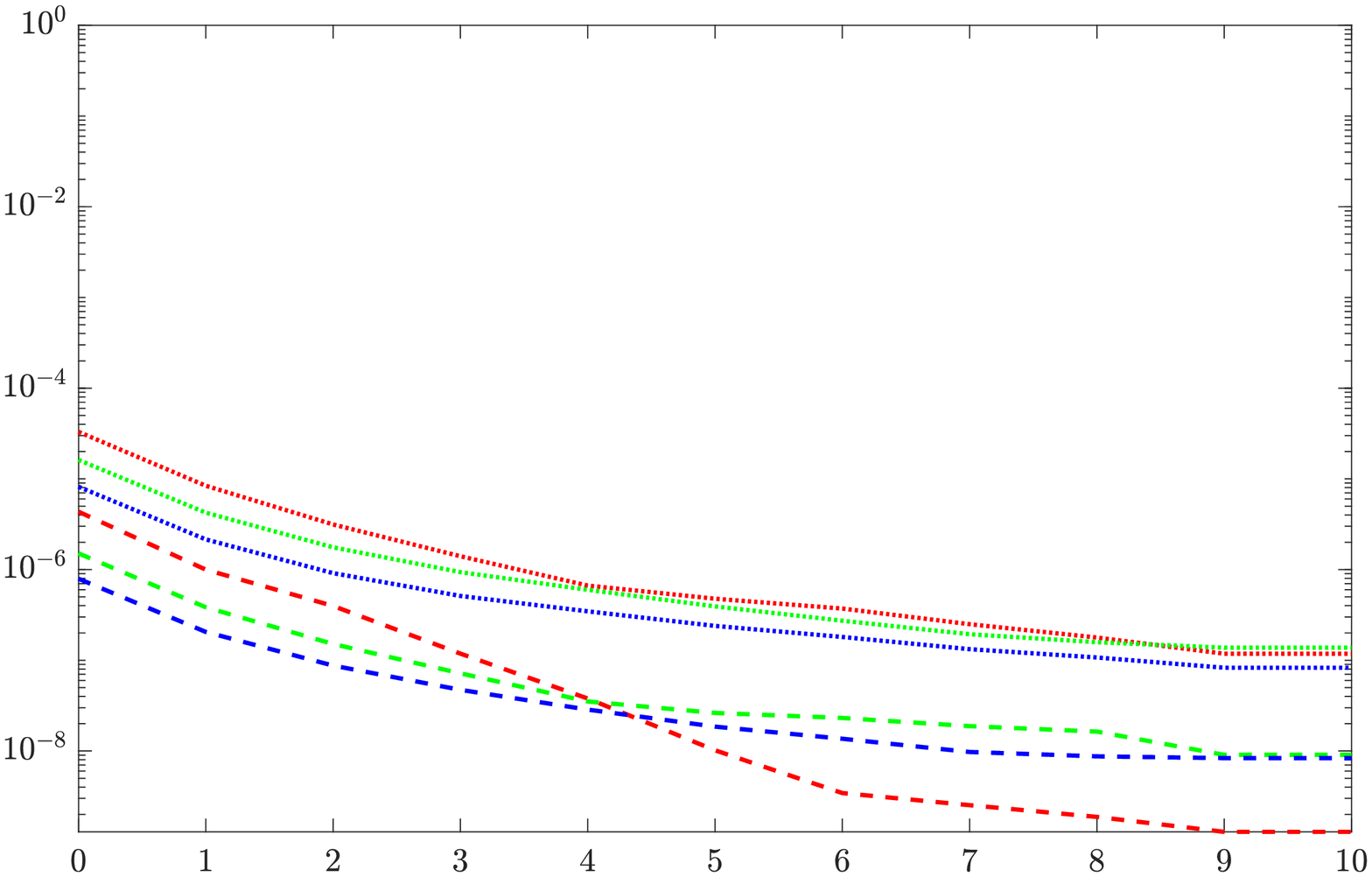}
		\caption{Setting 1)}
		\label{fig:3D1}
	\end{subfigure}
	\hfil
	\begin{subfigure}[b]{0.48\textwidth}
		\includegraphics[width=1\linewidth]{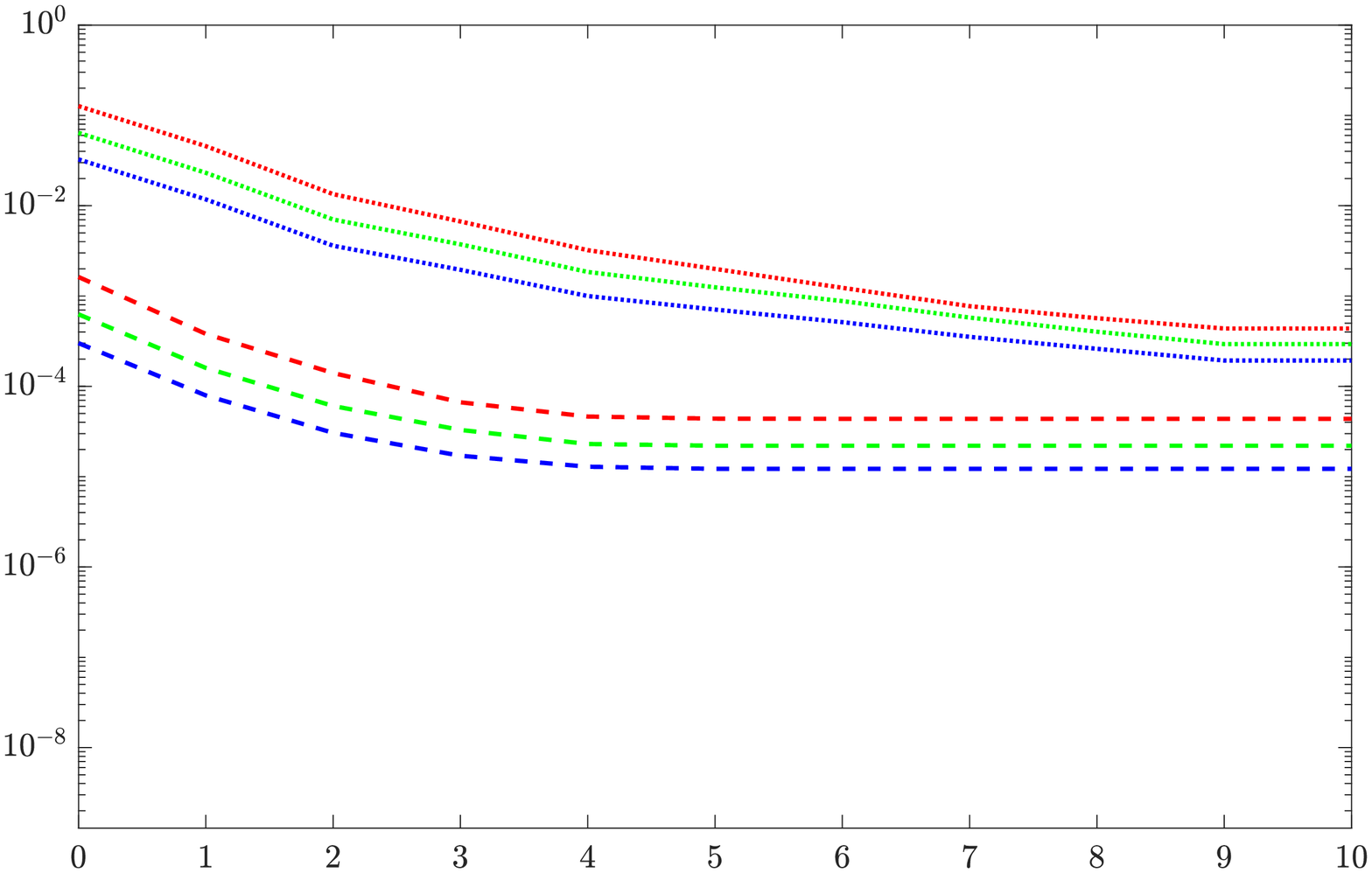}
		\caption{Setting 2)}
		\label{fig:3D2}
	\end{subfigure}
	\caption{Residuals $r_k^{(1)},r_k^{(2)}$ at each iteration of ASDSM applied to equation~\eqref{eq:genericPDE} for the two settings. The dotted, dashed lines represent $N_c=5,10$ respectively, while the red, green, blue and magenta lines represent $N_f=70,150,300$.}
	\label{fig:3D}
\end{figure}

\section{Conclusions}\label{sec:Conclusions}
The ASDSM Algorithm opens a new window on methods based on structured sparse residuals. These preliminary results show that, by decomposing the problem into disjoint sub-problems and merging the solutions, we are able to effectively reduce the dimensionality of the problem by $1$ and improve computational efficiency. 
While further research is needed to address issues such as convergence and optimal choice of coarse points, the ASDSM algorithm has the potential to be a valuable tool for tackling large linear systems as solver or by providing a good initial guess for other iterative solvers, e.g., Domain Decomposition methods. It may also be used to provide a fast solution for shooting methods, which do not always require a high accuracy.

In future work, we will explore alternatives to overcome the limitations of the current algorithm, as well as examine its potential application to non-linear problems and unstructured meshes.

\section*{Acknowledgments} 
This research was supported by the EuroHPC TIME-X project n. 955701.


%
%




\end{document}